\theoremstyle{plain}
\newtheorem{thm}[subsection]{Theorem}
\newtheorem{prop}[subsection]{Proposition}
\theoremstyle{definition}
\theoremstyle{remark}
\newcommand{\NN}{{ \mathbb{N} }}
\newcommand{\capX}{{ \mathcal{X} }}
\newcommand{\id}{{ \mathrm{id} }}
\newcommand{\SpaceZ}{{ \mathsf{S}_*^Z }}
\newcommand{\Space}{{ \mathsf{S}_* }}
\newcommand{\SpectraN}{{ \mathsf{Sp}^\NN }}
\newcommand{\Loop}{{ \Omega }}
\newcommand{\Loopt}{{ \tilde{\Omega} }}
\newcommand{\Susp}{{ \Sigma }}
\newcommand{\Suspt}{{ \tilde{\Sigma} }}
\newcommand{\wequiv}{{ \ \simeq \ }}
\newcommand{\Iso}{{  \ \cong \ }}
\DeclareMathOperator*{\holim}{holim}
\begin{document}

\title[Retractive spaces]{Retractive spaces and Bousfield-Kan completions}

\author{Zeshen Gu}
\author{John E. Harper}

\address{Department of Mathematics, The Ohio State University, 231 West 18th Ave, Columbus, OH 43210, USA}
\email{gu.1056@osu.edu}

\address{Department of Mathematics, The Ohio State University, Newark, 1179 University Dr, Newark, OH 43055, USA}
\email{harper.903@math.osu.edu}

\begin{abstract}
In this short paper we apply some recent techniques developed by Schonsheck, and subsequently Carr-Harper, in the context of operadic algebras in spectra---on convergence of  Bousfield-Kan completions and comparisons with convergence of the Taylor tower of the identity functor in Goodwillie's functor calculus---to the setting of retractive spaces: this arises when working with spaces centered away from the one-point space. Interestingly, in the retractive spaces context, the comparison results are stronger in terms of convergence outside of functor calculus' notion of ``radius of (strong) convergence'' for analytic functors. In particular, we give a new proof (and generalization to retractive spaces) of the Arone-Kankaanrinta result for convergence of the Taylor tower of the identity functor to various Bousfield-Kan completions; it's notable that no use is made of Snaith splittings---rather, we make extensive use of the kinds of homotopical estimates that appear in earlier work of Dundas and Dundas-Goodwillie-McCarthy.
\end{abstract}

\maketitle

\section{Introduction}

This paper is written simplicially so that ``space'' means ``simplicial set'' unless otherwise noted; see \cite{Bousfield_Kan, Goerss_Jardine}. In particular, we refer to the category of pointed simplicial sets $\Space$ as pointed spaces; this is equipped with the usual homotopy theory (\cite{Bousfield_Kan, Goerss_Jardine}). Our basic assumption is that $Z$ is a 0-connected fibrant pointed space. Denote by $\SpaceZ\Iso\id_Z\downarrow(\Space\downarrow Z)$ the factorization category (\cite[2.1]{Bauer_Johnson_McCarthy}, \cite[4.9]{Harper_Zhang}) of the identity map on $Z$, called the category of retractive pointed spaces over $Z$, equipped with the homotopy theory (\cite[2.1]{Bauer_Johnson_McCarthy}, \cite[4.9]{Harper_Zhang}) inherited from $\Space$; in particular, it has the structure of a simplicial cofibrantly generated model structure (\cite{Hirschhorn_overcategories}, \cite{Schwede_cotangent}) with an action of $\Space$ (\cite[4.2]{Hovey}). The setting of retractive spaces naturally arises in Goodwillie's homotopy functor calculus \cite{Goodwillie_calculus_2, Goodwillie_calculus_3} when working with Taylor towers centered away from the one-point space; see also \cite{Kuhn_survey}. When working with Bousfield-Kan completions, we make extensive use of the kinds of homotopical resolutions studied in \cite{Blumberg_Riehl}. We say that a retractive space $X$ over $Z$ is \emph{$k$-connected relative to $Z$} (or $k$-connected (rel. $Z$)) if the structure map $Z\rightarrow X$ is $k$-connected. 

Here are our main results. In the special case when $Z=*$ (the one-point space), Theorem \ref{MainTheorem_BK_completion} is proved by Bousfield and Hopkins \cite{Bousfield_cosimplicial_space} (for $r\geq 1$) and Carlsson \cite{Carlsson_equivariant} (for $r=\infty$) for any 0-connected nilpotent space $X$, and subsequently in \cite{Blomquist_Harper} for any 1-connected space $X$ (using different arguments closely related to \cite{Blomquist_Harper_integral_chains, Ching_Harper_derived, Dundas, Dundas_Goodwillie_McCarthy}). Our result in Theorem \ref{MainTheorem_BK_fibration}, generalizes this to any 0-connected (rel. $Z$) retractive space $F$ over $Z$, provided that, furthermore, $F$ fits into an appropriate homotopy pullback square. Our technical approach is motivated by the work in \cite{Schonsheck_fibration}, and the subsequent development in \cite{Carr_Harper}, for operadic algebras in spectra (where the estimates are different). We make extensive use of (the retractive version of) the homotopical estimates worked out in \cite{Blomquist_Harper}; these are the kinds of homotopical estimates that appear in earlier work of Dundas \cite{Dundas} and Dundas-Goodwillie-McCarthy \cite{Dundas_Goodwillie_McCarthy}. 

In the following theorems, $\Suspt_Z^r$ (resp. $\Loopt_Z^r$) denotes the derived $r$-fold suspension (\cite{Dwyer_Spalinski, Quillen}) (resp. derived $r$-fold loops (\cite{Dwyer_Spalinski, Quillen})) in $\SpaceZ$, and $\Suspt_Z^\infty$ (resp. $\Loopt_Z^\infty$) denotes derived stabilization (\cite{Hovey_spectra}) on $\SpaceZ$ (resp. derived 0-th object functor (\cite{Hovey_spectra}) on Hovey spectra $\SpectraN(\SpaceZ)$ on $\SpaceZ$).

\begin{thm}
\label{MainTheorem_BK_completion}
Assume that $Z$ is a 0-connected pointed space. Let $X$ be a retractive pointed space over $Z$. If $X$ is $1$-connected (rel. $Z$),  then the coaugmentations
\begin{align*}
  X\wequiv X^\wedge_{\Loopt_Z^r\Suspt_Z^r},\quad\quad (1\leq r\leq \infty)
\end{align*}
are weak equivalences in retractive pointed spaces over $Z$.
\end{thm}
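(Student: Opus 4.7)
The plan is to follow the general pattern of \cite{Schonsheck_fibration, Carr_Harper, Blomquist_Harper}: recognize $X^\wedge_{\Loopt_Z^r\Suspt_Z^r}$ as the homotopy limit of the tower of partial totalizations of the cosimplicial resolution $T_r^{\bullet+1} X$, where $T_r := \Loopt_Z^r \Suspt_Z^r$ is the (derived) triple on $\SpaceZ$ (or, for $r=\infty$, on $\SpectraN(\SpaceZ)$) coming from the unit-counit adjunction. It will then suffice to show that the coaugmentation $X \rightarrow \Tot^s(T_r^{\bullet+1} X)$ has connectivity rel.\ $Z$ tending to $\infty$ with $s$, so that passing to the homotopy limit yields the desired weak equivalence in $\SpaceZ$.

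First I would establish a retractive-spaces version of the Freudenthal-type connectivity estimates developed in \cite{Blomquist_Harper, Dundas, Dundas_Goodwillie_McCarthy}: if $X$ is $k$-connected rel.\ $Z$ with $k\geq 1$, then the unit map $X \to \Loopt_Z \Suspt_Z X$ is roughly $(2k)$-connected rel.\ $Z$, with an analogous improvement for the iterated unit $X \to T_r X$ for each finite $r$; for $r=\infty$, derived stabilization on $\SpaceZ$ preserves connectivity exactly, giving a sharper estimate. These are obtained by transporting the absolute arguments to the retractive setting through the homotopical resolutions of \cite{Blumberg_Riehl}, measuring all connectivities against the structure map from $Z$.

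Next I would propagate the estimates through the tower of totalizations. The homotopy fiber of $\Tot^s \to \Tot^{s-1}$ is the $s$-fold loop space of the normalized $s$-cosimplices of $T_r^{\bullet+1} X$; iterating the Freudenthal-type estimate on the cotriple iterates $T_r^{s+1}X$ bounds the connectivity of these normalized cosimplices rel.\ $Z$ from below by a quantity growing much faster than $s$ (because each application of $T_r$ roughly doubles the connectivity), so that after applying $\Omega^s$ the fiber's connectivity rel.\ $Z$ still grows at least linearly in $s$. Because $X$ is $1$-connected rel.\ $Z$, the cotriple iterates remain inside the range where the Freudenthal-type estimate applies throughout. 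By induction on $s$, the connectivity rel.\ $Z$ of $X \to \Tot^s(T_r^{\bullet+1} X)$ grows linearly in $s$, from which the weak equivalence in the limit follows by a standard Milnor short exact sequence and $\holim$ convergence argument.

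The main obstacle will be verifying that the Blomquist--Harper/Dundas cubical/iterated-cotriple estimates survive the passage to retractive spaces with quantitatively sharp enough bounds uniformly across $r \in [1,\infty]$, and in particular checking that the connectivity gains from Freudenthal dominate the $\Omega^s$ losses at each stage. Once those retractive estimates are in hand, the remaining tower-convergence and homotopy-limit comparison steps proceed by routine arguments parallel to those of \cite{Schonsheck_fibration, Carr_Harper, Blomquist_Harper}.
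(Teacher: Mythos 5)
Your high-level strategy coincides with the paper's: reduce to showing the coaugmentation $X\rightarrow\holim_{\Delta^{\leq n}}(\Loopt_Z^r\Suspt_Z^r)^{\bullet+1}(X)$ has connectivity tending to infinity, and pass to the homotopy limit. The paper's proof phrases this through the cartesian-ness of the coface $(n+1)$-cubes of the coaugmented cosimplicial resolution (citing \cite[3.13]{Blomquist_Harper_integral_chains}), whereas you filter by the $\Tot^s$-tower and analyze the layer fibers $\Omega^s N^s$; these are two bookkeeping schemes for the same underlying quantity, so that part is fine.

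The genuine gap is in the claimed mechanism for the connectivity growth. You assert that the normalized cosimplices become highly connected ``because each application of $T_r$ roughly doubles the connectivity.'' This is false: if $X$ is $k$-connected (rel.\ $Z$), then $\Loopt_Z^r\Suspt_Z^r X$ is still only $k$-connected (rel.\ $Z$), and the unit map $\Loopt_Z^r\Suspt_Z^r X\rightarrow(\Loopt_Z^r\Suspt_Z^r)^2 X$ is again just $(2k+1)$-connected, not $(4k)$-connected. Iterating the monad does not improve the connectivity of the objects, and the unit map's connectivity does not compound. The resulting growth is \emph{not} ``much faster than $s$''; in the paper it is exactly linear ($n+3$ for the coface $(n+1)$-cube), and getting even that requires a nontrivial cubical propagation result: the paper's Proposition \ref{prop:estimates_for_spaces}, which shows that if an $n$-cube is $(k(\id+1)+1)$-cartesian then so is the $(n+1)$-cube $\capX\rightarrow\Loopt_Z^r\Suspt_Z^r\capX$, proved via higher Blakers--Massey in the retractive setting (together with the base case Proposition \ref{prop:uniformity_estimates_low_dimension}). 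Your outline replaces this cubical propagation with an object-level ``doubling'' heuristic that does not hold, so as written the key step is not justified; you would need to formulate and prove the analogue of Proposition \ref{prop:estimates_for_spaces} (or an equivalent layer-by-layer estimate on $N^s$) and then the $\Omega^s$-loss is only just barely dominated, by a linear margin.
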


\begin{thm}
\label{MainTheorem_BK_fibration}
Assume that $Z$ is a 0-connected pointed space. If $F\rightarrow X\rightarrow Y$ is a fibration sequence in retractive pointed spaces over $Z$ and $X,Y$ are 1-connected (rel. $Z$), then the coaugmentations
\begin{align*}
  F\wequiv F^\wedge_{\Loopt_Z^r\Suspt_Z^r},\quad\quad (1\leq r\leq \infty)
\end{align*} 
are weak equivalences in retractive pointed spaces over $Z$. More generally, let
\begin{align*}
\xymatrix{
  F\ar[r]\ar[d] & X\ar[d]\\
  A\ar[r] & Y
}
\end{align*}
be a homotopy pullback square in retractive pointed spaces over $Z$. If $A,X,Y$ are 1-connected (rel. $Z$), then the coaugmentations
\begin{align*}
  F\wequiv F^\wedge_{\Loopt_Z^r\Suspt_Z^r},\quad\quad (1\leq r\leq \infty)
\end{align*} 
are weak equivalences in retractive pointed spaces over $Z$.
\end{thm}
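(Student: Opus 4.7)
The first (fibration-sequence) statement is the special case of the second in which $A \wequiv Z$ is the trivial retractive space over $Z$ (trivially 1-connected rel.\ $Z$) and $F$ is the derived fiber of $X\rightarrow Y$, so it suffices to prove the homotopy-pullback version.

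The plan is to compare $F^\wedge_{\Loopt_Z^r\Suspt_Z^r}$ with the homotopy pullback of the completions at $A$, $X$, $Y$. Functoriality of the Bousfield-Kan construction produces, for each $1\leq r\leq \infty$, a comparison map
\begin{align*}
\phi\colon\thinspace F^\wedge_{\Loopt_Z^r\Suspt_Z^r} \rarrow A^\wedge_{\Loopt_Z^r\Suspt_Z^r} \times^h_{Y^\wedge_{\Loopt_Z^r\Suspt_Z^r}} X^\wedge_{\Loopt_Z^r\Suspt_Z^r}
\end{align*}
compatible with the coaugmentation on $F$. By Theorem \ref{MainTheorem_BK_completion} applied to $A$, $X$, $Y$ (each 1-connected rel.\ $Z$), the three completions on the right are weakly equivalent to $A$, $X$, $Y$; the target of $\phi$ is therefore weakly equivalent to $A \times^h_Y X \wequiv F$ (using the homotopy pullback hypothesis). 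Hence it suffices to verify that $\phi$ is a weak equivalence.

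To do this, realize both sides as homotopy totalizations of the cosimplicial resolutions $(\Loopt_Z^r\Suspt_Z^r)^{\bullet+1}(-)$ on the four corners of the square. Since derived loops $\Loopt_Z^r$ and homotopy totalization each preserve homotopy pullbacks, the only obstruction to $\phi$ being an equivalence comes from the fact that derived suspension $\Suspt_Z^r$ does not. The strategy, following \cite{Schonsheck_fibration, Carr_Harper}, is to show that at each cosimplicial level the square in question is an approximate homotopy pullback, with connectivity estimates forcing the $\Tot^s$-tower to converge pro-equivalently to the pullback of $\Tot^s$-towers.

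The main obstacle is establishing these connectivity estimates. The plan is to use the retractive analogues of the homotopical estimates developed in \cite{Blomquist_Harper} (extending ideas of \cite{Dundas, Dundas_Goodwillie_McCarthy}): applied to a square of $1$-connected (rel.\ $Z$) retractive spaces, $(\Loopt_Z^r\Suspt_Z^r)^{n+1}$ produces a square whose failure to be a homotopy pullback has connectivity growing with the cosimplicial degree $n$ and with $r$. Once these are in place, the $\Tot^s$-tower of the cosimplicial pullback agrees pro-equivalently with the pullback of the $\Tot^s$-towers, giving that $\phi$ is a weak equivalence and completing the proof.
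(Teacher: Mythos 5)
Your reduction of the fibration-sequence case to the pullback case, your identification of the comparison map $\phi\colon F^\wedge_{\Loopt_Z^r\Suspt_Z^r}\to A^\wedge_{\Loopt_Z^r\Suspt_Z^r}\times^h_{Y^\wedge_{\Loopt_Z^r\Suspt_Z^r}}X^\wedge_{\Loopt_Z^r\Suspt_Z^r}$, and your observation that its target is $\wequiv F$ by Theorem \ref{MainTheorem_BK_completion} (compatibly with the coaugmentation) are all correct and outline a legitimate strategy. The paper instead sets up a double cosimplicial diagram by resolving $F\rightarrow\tilde F^\bullet$ term-by-term with the Bousfield--Kan resolution and runs a Fubini argument: applying $\holim_\Delta$ vertically first uses the extra codegeneracy maps (a formal consequence of $\Loopt_Z$ commuting with homotopy fibers) to collapse each column, then $\holim_\Delta\tilde F^\bullet\wequiv F$; applying $\holim_\Delta$ horizontally first uses the $(\id+1)$-cartesian estimates to collapse each row, recovering $F^\wedge_{\Loopt_Z\Suspt_Z}$. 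So the routes are genuinely different in structure, and yours could in principle avoid the double-cosimplicial bookkeeping.

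However, the step on which your entire argument rests has a gap. You assert that $(\Loopt_Z^r\Suspt_Z^r)^{n+1}$ applied to the pullback square produces a square ``whose failure to be a homotopy pullback has connectivity growing with the cosimplicial degree $n$.'' This is not true: each application of $\Suspt_Z$ degrades cartesian-ness by an amount controlled by the (fixed) connectivity of the four corners, and a Blakers--Massey computation like that in Proposition \ref{prop:uniformity_estimates_low_dimension} shows the levelwise cartesian-ness of $(\Loopt_Z\Suspt_Z)^{n+1}\capX$ plateaus at a fixed, $n$-independent value rather than growing. This is precisely the ``outside the radius of strong convergence'' phenomenon the paper is designed to handle: since $F$ need only be $0$-connected (rel. $Z$), levelwise estimates alone cannot force the $\Tot^s$-tower to converge. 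What actually drives convergence in the paper is a strictly higher-dimensional estimate: the coface $(n+1)$-cubes of the coaugmented cosimplicial diagram $F\rightarrow\tilde F^\bullet$ are $(\id+1)$-cartesian (deduced from the estimates for $X$ and $Y$ via \cite[1.6, 1.18]{Goodwillie_calculus_2}), and --- crucially --- $\Loopt_Z^r\Suspt_Z^r$ preserves $(\id+1)$-cartesian cubes (Propositions \ref{prop:preserving_id_plus_one_cartesian_cubes_r_equals_one} and \ref{prop:preserving_id_plus_one_cartesian_cubes}), so the estimate survives iteration. Your proposal names neither the coface-cube estimate nor the preservation result, and the levelwise substitute you suggest in their place would not produce the needed pro-equivalence of $\Tot^s$-towers.
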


In the special case when $Z=*$, Theorem \ref{MainTheorem_exotic} is proved in Arone-Kankaanrinta \cite{Arone_Kankaanrinta}  for $r=\infty$ (using closely related, but different, arguments). We generalize their result to spaces centered away from $*$ and for $1\leq r\leq \infty$. Our technical approach is motivated by the work in \cite{Schonsheck_TQ}, and the subsequent development in \cite{Carr_Harper}, for operadic algebras in spectra (where the estimates are different). It's notable that no use is made of Snaith splittings---rather (as above) we make extensive use of (the retractive version of) the homotopical estimates worked out in \cite{Blomquist_Harper}, which are similar in spirit to the kinds of homotopical estimates appearing in the earlier work of  Dundas \cite{Dundas} and Dundas-Goodwillie-McCarthy \cite{Dundas_Goodwillie_McCarthy}; the possibility of giving a proof of Theorem \ref{MainTheorem_exotic} (when $Z=*$) along the lines developed here, was suggested in \cite{Schonsheck_private}.

\begin{thm}
\label{MainTheorem_exotic}
Assume that $Z$ is a 0-connected pointed space. Let $X$ be a retractive pointed space over $Z$. If $X$ is 0-connected (rel. $Z$), then there are weak equivalences of the form
\begin{align*}
  P_\infty^Z(\id)X\wequiv X^\wedge_{\Loopt_Z^r\Suspt_Z^r},
  \quad\quad (1\leq r\leq \infty)
\end{align*}
in retractive pointed spaces over $Z$; here, $P_n^Z(\id)X$ is the $n$-excisive approximation to the identity functor $\id$ on retractive pointed spaces over $Z$, evaluated at $X$, and $P_\infty^Z(\id)X$ denotes the homotopy limit of the associated Taylor tower $\{P_n^Z(\id)X\}$ of the identity functor $\id$, evaluated at $X$, in Goodwillie's functor calculus \cite{Goodwillie_calculus_3}.
\end{thm}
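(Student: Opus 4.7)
The plan is to compare the two constructions—the Taylor tower limit $P_\infty^Z(\id)X$ and the $\Loopt_Z^r\Suspt_Z^r$-completion $X^\wedge_{\Loopt_Z^r\Suspt_Z^r}$—via a direct analysis of their defining filtrations, following the template of \cite{Schonsheck_TQ} and its refinement \cite{Carr_Harper} for operadic algebras in spectra. The target is modeled as the homotopy limit of the cosimplicial object $[n]\mapsto(\Loopt_Z^r\Suspt_Z^r)^{n+1}X$, while the source is the homotopy limit of the Taylor tower $\{P_n^Z(\id)X\}$; the goal is to produce a comparison map $P_\infty^Z(\id)X\rarrow X^\wedge_{\Loopt_Z^r\Suspt_Z^r}$ and then verify it is a weak equivalence.

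The first reduction is the 1-connected (rel. $Z$) case, which serves as a warmup and as a base for the general argument. Here Theorem \ref{MainTheorem_BK_completion} provides $X\wequiv X^\wedge_{\Loopt_Z^r\Suspt_Z^r}$, and the retractive version of Goodwillie's convergence theorem (the identity on $\SpaceZ$ being 1-analytic with radius of convergence 1) gives $P_\infty^Z(\id)X\wequiv X$. Both sides of Theorem \ref{MainTheorem_exotic} are thus equivalent to $X$ in this range. Moreover, this identification pins down that the natural zigzag $P_\infty^Z(\id)X\larrow X\rarrow X^\wedge_{\Loopt_Z^r\Suspt_Z^r}$ behaves as expected, allowing a coherent comparison map to be extracted in general by functoriality.

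For the 0-connected (rel. $Z$) case, I would analyze the iterated cosimplicial resolution layer-by-layer using the retractive version of the connectivity estimates of \cite{Blomquist_Harper}. The key point is that, while a single application of $\Loopt_Z^r\Suspt_Z^r$ to a 0-connected input need not increase connectivity, the successive fibers of the partial totalizations do, at an unbounded linear rate. A parallel analysis of the Taylor tower of the identity—namely, that $P_n^Z(\id)X$ is built from cross-effect fibers whose connectivity grows linearly in $n$—then yields a term-by-term comparison between the two filtrations. Passing to homotopy limits via a Milnor sequence / $\limit^1$ argument produces the desired equivalence.

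The hard part will be the last step: matching the two towers quantitatively on 0-connected input. Neither tower converges to $X$ on the nose in this range, so one cannot collapse either side as in the 1-connected warmup; the comparison must be driven entirely by sharp connectivity bounds for the iterated monad $\Loopt_Z^r\Suspt_Z^r$ and for the cross-effects of the identity, propagated in parallel. Since we make no use of Snaith splittings (unlike \cite{Arone_Kankaanrinta}), the retractive Blomquist-Harper estimates—in the spirit of \cite{Dundas, Dundas_Goodwillie_McCarthy}—must carry the full weight of the argument, exactly as the corresponding TQ-connectivity estimates do in the operadic algebras work of \cite{Schonsheck_TQ, Carr_Harper}; the bookkeeping that propagates the relative connectivity (rel. $Z$) consistently through both filtrations is the principal technical burden.
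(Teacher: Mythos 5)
Your proposal assembles the right ingredients---the Blomquist--Harper connectivity estimates in the retractive setting, the analogy with the operadic-algebras template of Schonsheck and Carr--Harper, and the recognition that neither tower collapses to $X$ when $X$ is only 0-connected (rel.\ $Z$)---but it stops short of identifying the actual comparison device that makes the argument work, which is the place you flag as ``the hard part.'' The paper does not carry out a term-by-term matching of cross-effect fibers, nor does it invoke a Milnor/$\limit^1$ exact sequence in the final step; instead it forms a two-dimensional \emph{tower of towers}
\[
\bigl\{\,P_m^Z(\Loopt_Z^r\Suspt_Z^r)_n(X)\,\bigr\}_{m,n}
\]
by resolving each stage $(\Loopt_Z^r\Suspt_Z^r)_n(X)$ of the Bousfield--Kan completion tower by its own Goodwillie tower, and then commutes the two homotopy limits. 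In one order, the uniformity estimates (Propositions \ref{prop:uniformity_estimates_low_dimension} and \ref{prop:estimates_for_spaces}) show that $\id\rightarrow(\Loopt_Z^r\Suspt_Z^r)_n$ agrees to order $n+1$, hence $P_m^Z(\id)(X)\wequiv P_m^Z(\Loopt_Z^r\Suspt_Z^r)_n(X)$ for $m\leq n+1$, so the horizontal holim recovers the Taylor tower of the identity, and then the vertical holim gives $P_\infty^Z(\id)(X)$. In the other order, Proposition \ref{prop:connectivities_eventually_increase_general_r} shows $(\Loopt_Z^r\Suspt_Z^r)_k(X)\rightarrow P_{n+k}^Z(\Loopt_Z^r\Suspt_Z^r)_k(X)$ is $(n+1)$-connected (unbounded in $n$), so the vertical holim recovers the BK completion tower, and the horizontal holim gives $X^\wedge_{\Loopt_Z^r\Suspt_Z^r}$. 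Without this grid and the interchange, ``compare the two filtrations term by term'' is an aspiration, not an argument.

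Two more specific points. First, the 1-connected warmup is a dead end in the way you use it: Theorem \ref{MainTheorem_BK_completion} and Goodwillie convergence do identify both sides with $X$ in that range, but you don't actually explain how this ``allows a coherent comparison map to be extracted in general by functoriality,'' and indeed the paper's proof does not route through the 1-connected case at all---it goes directly to the tower-of-towers for 0-connected (rel.\ $Z$) input. Second, you frame the needed estimates in terms of connectivity of cross-effect fibers of the identity; the paper never analyzes cross-effects directly. What it needs is (i) the agreement-to-order condition $O_{n+1}$ for $\id\rightarrow(\Loopt_Z^r\Suspt_Z^r)_n$, obtained from the cubical Blakers--Massey estimates, and (ii) the $(n+1)$-connectivity of the maps into the $P_{n+k}^Z$-stages of the Goodwillie towers of the functors $(\Loopt_Z^r\Suspt_Z^r)_k$, which is Propositions \ref{prop:estimates_for_exotic} and \ref{prop:connectivities_eventually_increase_general_r}. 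Your plan would need to be supplemented with the interchange-of-holims scheme and these two specific inputs to close the gap.
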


To keep this paper appropriately concise, we will freely use language from \cite{Blomquist_Harper}.

\section{Proofs of the main results}

To get Bousfield-Kan completion into the picture, we work with the kinds of homotopical resolutions studied in \cite{Blumberg_Riehl}. There are adjunctions of the form

\begin{align*}
\xymatrix{
  \SpaceZ\ar@<0.5ex>[r]^-{\Susp^r_Z} &
  \SpaceZ\ar@<0.5ex>[l]^-{\Loop^r_Z}
}
\quad\quad
\xymatrix{
  \SpaceZ\ar@<0.5ex>[r]^-{\Susp^\infty_Z} &
  \SpectraN(\SpaceZ)\ar@<0.5ex>[l]^-{\Loop^\infty_Z}
}
\quad\quad
(r\geq 1)
\end{align*}
with left adjoints on top, where $\Susp_Z^r$ is given by the pointed spaces action of $S^r:=(S^1)^{\wedge r}\in\Space$ on objects in $\SpaceZ$ and $\SpectraN(\SpaceZ)$ denotes Hovey spectra (\cite{Hovey_spectra}) on $\SpaceZ$; here, $\Susp_Z^\infty$ (resp. $\Loop_Z^\infty$) denotes the stabilization (resp. ``0-th object'') functor. Denote by $\id\rightarrow\Phi$ and $\Phi\Phi\rightarrow\Phi$ the unit and multiplication maps of the fibrant replacement monad $\Phi$ on $\SpaceZ$ (see \cite[6.1]{Blumberg_Riehl}) and define 
$\Loopt^r_Z:=\Loop^r_Z \Phi$. Similarly, denote by $\id\rightarrow F$ and $FF\rightarrow F$ the unit and multiplication maps of the fibrant replacement monad $F$ on $\SpectraN(\SpaceZ)$ (see \cite[6.1]{Blumberg_Riehl}) and define $\Loopt_Z^\infty:=\Loop^\infty_Z F$. Since every object in $\SpaceZ$ is cofibrant, $\Susp^r$ is already derived and we define $\Suspt^r:=\Susp^r$. If we iterate the comparison map $\id\rightarrow\Loopt^r_Z\Suspt^r_Z$ it follows that we can build a cosimplicial resolution of $\id$ with respect to $\Loopt^r_Z\Suspt^r_Z$ of the form
\begin{align}
\label{eq:derived_resolution}
\xymatrix{
  \id\ar[r] &
  (\Loopt^r_Z\Suspt^r_Z)\ar@<-0.5ex>[r]\ar@<0.5ex>[r] &
  (\Loopt^r_Z\Suspt^r_Z)^2
  \ar@<-1.0ex>[r]\ar[r]\ar@<1.0ex>[r] &
  (\Loopt^r_Z\Suspt^r_Z)^3\cdots
  }
\end{align}
for each $1\leq r\leq \infty$; these are the types of homotopical resolutions studied in \cite{Blumberg_Riehl}; see also \cite{Blomquist_Harper, Carr_Harper, Ching_Harper_derived}. Here, we are only showing the coface maps. If $X\in\SpaceZ$, the Bousfield-Kan completion of $X$ with respect to $\Loopt^r_Z\Suspt^r_Z$ is the homotopy limit
\begin{align}
  X^\wedge_{\Loopt^r_Z\Suspt^r_Z}:=\holim\nolimits_\Delta (\Loopt^r_Z\Suspt^r_Z)^{\bullet+1}(X)
\end{align}
of the Bousfield-Kan cosimplicial resolution \eqref{eq:derived_resolution} evaluated at $X$. To obtain the Bousfield-Kan completion tower, we filter $\Delta$ (\cite[5.22]{Blomquist_Harper}) by its subcategories $\Delta^{\leq n}\subset\Delta$, $n\geq 0$, and define
\begin{align*}
  (\Loopt^r_Z\Suspt^r_Z)_n:=
  \holim\nolimits_{\Delta^{\leq n}}(\Loopt^r_Z\Suspt^r_Z)^{\bullet+1},
  \quad\quad
  n\geq 0
\end{align*}
to obtain the $\Loopt^r_Z\Suspt^r_Z$-completion of $X$
\begin{align}
  X^\wedge_{\Loopt^r_Z\Suspt^r_Z}\wequiv\holim
  \Bigl(
  (\Loopt^r_Z\Suspt^r_Z)_0(X)\leftarrow(\Loopt^r_Z\Suspt^r_Z)_1(X)
  \leftarrow(\Loopt^r_Z\Suspt^r_Z)_2(X)\leftarrow\cdots
  \Bigr)
\end{align}
as the homotopy limit of the completion tower, where
\begin{align*}
  (\Loopt^r_Z\Suspt^r_Z)_0(X)\wequiv (\Loopt^r_Z\Suspt^r_Z)(X)
\end{align*}
 
For conceptual simplicity and convenience we denote by $*_Z:=Z$ the null object in $\SpaceZ$. It will be useful to denote by $*'_Z\wequiv *_Z$ an appropriately fattened-up version of the null object $*_Z$ in $\SpaceZ$.

\begin{prop}
\label{prop:uniformity_estimates_low_dimension}
Let $k\geq 0$ and $1\leq r\leq \infty$. Let $X$ be a retractive pointed space over $Z$. If $X$ is $k$-connected (rel. $Z$), then the comparison map $X\rightarrow\Loopt_Z^r\Suspt_Z^r X$ is $(2k+1)$-connected.
\end{prop}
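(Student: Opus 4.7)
The proposition is a retractive-to-$Z$ analogue of Freudenthal's suspension theorem. I would prove it by first establishing the base case $r=1$, iterating to handle finite $r$, and passing to a colimit for $r=\infty$.

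For the base case, I would view $\SpaceZ$ as a pointed simplicial model category with null object $*_Z=Z$, so that ``$k$-connected rel.\ $Z$'' is precisely ``$k$-connected in this pointed setting''. The desired estimate is then the Freudenthal suspension theorem in $\SpaceZ$: for $k$-connected (rel.\ $Z$) $X$, the derived unit $X\rightarrow \Loopt_Z \Suspt_Z X$ of the $(\Susp_Z \dashv \Loop_Z)$-adjunction is $(2k+1)$-connected. Concretely, this follows by applying classical Blakers--Massey/Freudenthal estimates to the pushout presentation of $S^1 \wedge_Z X$ and the pair $(X,Z)$, along the lines of the homotopical estimates worked out in \cite{Blomquist_Harper} in the non-retractive case.

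For $1 \leq r < \infty$, factor the derived unit as
\begin{align*}
X \rightarrow \Loopt_Z \Suspt_Z X \rightarrow \Loopt_Z^2 \Suspt_Z^2 X \rightarrow \cdots \rightarrow \Loopt_Z^r \Suspt_Z^r X,
\end{align*}
where the $j$th arrow arises by applying $\Loopt_Z^{j-1}$ to the single-suspension unit at $\Suspt_Z^{j-1} X$. Since $\Suspt_Z^{j-1} X$ is $(k+j-1)$-connected rel.\ $Z$, the base case gives that this unit is $(2k+2j-1)$-connected, and applying $\Loopt_Z^{j-1}$ drops this bound by $j-1$, yielding a $(2k+j)$-connected map. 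The composite is therefore at least $(2k+1)$-connected, the minimum being achieved at $j=1$. For $r=\infty$, identify $\Loopt_Z^\infty \Suspt_Z^\infty X$ with the homotopy colimit $\hocolim_r \Loopt_Z^r \Suspt_Z^r X$; since each finite-stage map is at least $(2k+1)$-connected uniformly in $r$, the induced map to the homotopy colimit is $(2k+1)$-connected.

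The main obstacle is the base case: the pointed-spaces action on $\SpaceZ$ producing $S^1 \wedge_Z X$ is built from a pushout that must be analyzed carefully to confirm it behaves, up to the expected connectivity shift, like ordinary reduced smashing on the ``reduced'' part of $X$ (i.e.\ on the cofiber of $Z\rightarrow X$). This amounts to the retractive version of the Dundas \cite{Dundas} and Dundas--Goodwillie--McCarthy \cite{Dundas_Goodwillie_McCarthy} style estimates appearing in \cite{Blomquist_Harper}; once the single-suspension Freudenthal bound is established in the retractive setting, the iteration and colimit arguments are formal.
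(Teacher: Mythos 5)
Your proposal is correct and matches the paper's approach: the base case $r=1$ is exactly the paper's argument, applying higher Blakers--Massey to the pushout $2$-cube with both off-diagonal entries a fattened $*_Z$ (giving $\min(\infty-1,\,2(k+1)-1)=2k+1$), and the passage to finite $r$ by factoring through the single-suspension units at $\Suspt_Z^{j-1}X$ and to $r=\infty$ via the homotopy colimit is what the paper means by ``repeated application of the $r=1$ case ... in the usual way.'' Your iteration step is slightly more explicit than the paper's (you actually compute that the $j$th arrow is $(2k+j)$-connected and observe the minimum is at $j=1$), but the method is the same.
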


\begin{proof}
Consider the case $r=1$. Consider a pushout cofibration 2-cube of the form
\begin{align*}
\xymatrix{
  X\ar[d]\ar[r] & {*'_Z}\ar[d]\\
  {*'_Z}\ar[r] & \Suspt_Z X
}
\end{align*}
in $\SpaceZ$. By assumption we know that the upper and left-hand 1-faces are $(k+1)$-connected. Since the 2-cube is $\infty$-cocartesian, it follows that the lower and right-hand 1-faces are $(k+1)$-connected. By higher Blakers-Massey \cite[2.5]{Goodwillie_calculus_2} for $\Space$, we know that the 2-cube is $l$-cartesian where $l$ is the minimum of 
\begin{align*}
  1-2+l_{\{1,2\}} &= -1 +\infty\\
  1-2+l_{\{1\}} + l_{\{2\}} &= -1 + (k+1) + (k+1)
\end{align*}
Hence $l=2k+1$, the 2-cube is $(2k+1)$-cartesian, and the comparison map $X\rightarrow\Loopt_Z\Suspt_Z X$ is $(2k+1)$-connected. The other cases ($r\geq 2$) follow by repeated application of the $r=1$ case to $\Suspt_Z X, \Suspt^2_Z X, \Suspt^3_Z X, \dots$ in the usual way, and finally, for $r=\infty$ by considering the homotopy colimit of the resulting sequence.
\end{proof}

For $k\geq 1$, this pattern persists for the iterative application of $\id\rightarrow\Loopt_Z^r\Suspt_Z^r$ to go from 0-cubes to 1-cubes to 2-cubes to 3-cubes, and so forth.

\begin{prop}
\label{prop:estimates_for_spaces}
Let $k\geq 1$ and $1\leq r\leq\infty$. Let $W$ be a finite set and $\capX$ a $W$-cube in $\SpaceZ$. Let $n=|W|$. If the $n$-cube $\capX$ is $(k(\id+1)+1)$-cartesian in $\SpaceZ$, then so is the $(n+1)$-cube of the form $\capX\rightarrow\Loopt_Z^r\Suspt_Z^r\capX$.
\end{prop}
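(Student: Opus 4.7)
The plan is to adapt the $n=0$ argument of Proposition~\ref{prop:uniformity_estimates_low_dimension} to general $n$-cubes, via higher Blakers-Massey applied to a suitable auxiliary $(n+2)$-cube. As in Proposition~\ref{prop:uniformity_estimates_low_dimension}, I would first reduce to the case $r=1$: the cases $r \geq 2$ follow by iterating the $r=1$ argument on $\Suspt_Z\capX, \Suspt_Z^2\capX, \ldots$, and $r = \infty$ follows by taking the filtered homotopy colimit.

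For $r=1$, I would construct a $(W \cup \{1,2\})$-cube $\capZ$ in $\SpaceZ$ by attaching, at each vertex $V \subseteq W$, the standard pushout square
\begin{align*}
\xymatrix{
  \capX(V) \ar[r] \ar[d] & *'_Z \ar[d] \\
  *'_Z \ar[r] & \Suspt_Z \capX(V)
}
\end{align*}
in two new cube directions. Each $2$-face of $\capZ$ in the directions $\{1,2\}$ is $\infty$-cocartesian by construction, so $\capZ$ is strongly cocartesian in those two directions; moreover, $\Loopt_Z\Suspt_Z\capX(V)$ is the homotopy limit of the punctured $2$-cube $\capZ(V,-)$ indexed by $\emptyset \neq S \subseteq \{1,2\}$. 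An iterated-homotopy-fiber computation then yields that the total homotopy fibers of $\capZ$ and of the $(n+1)$-cube $\capY = (\capX \to \Loopt_Z\Suspt_Z\capX)$ agree, so $\capY$ is $c$-cartesian if and only if $\capZ$ is $c$-cartesian, reducing us to estimating the cartesianness of $\capZ$. I would then apply higher Blakers-Massey \cite[2.5]{Goodwillie_calculus_2} to the $(n+2)$-cube $\capZ$ and estimate the cocartesianness $l_T$ of each non-empty subface: for $T \supseteq \{1,2\}$ one has $l_T = \infty$ from the vertex-wise pushout squares; for $|T \cap \{1,2\}| = 1$ one has $l_T$ controlled by the relative connectivities of the $\capX(V)$ via the $1$-faces $\capX(V) \to *'_Z$ and $*'_Z \to \Suspt_Z\capX(V)$; and for $T \subseteq W$ one has $l_T$ coming from the cartesianness of $\capX$. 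Taking the minimum over all partitions in the Blakers-Massey sum $1-(n+2)+\sum_\alpha l_{T_\alpha}$, the critical contribution comes from the partition separating $\{1\}$ and $\{2\}$; the various pieces assemble to yield the required lower bound of $k(n+2)+1$.

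The main obstacle is the Blakers-Massey arithmetic: one must carefully partition $W \cup \{1,2\}$, estimate each $l_{T_\alpha}$, and verify that the minimum over all partitions matches the growth rate of $+k$ per new cube dimension already visible in Proposition~\ref{prop:uniformity_estimates_low_dimension}. A delicate point is that the case $|T \cap \{1,2\}| = 1$ uses vertex-wise relative connectivities $c_V \geq k$ for $\capX(V)$ over $Z$; while not explicit in the cartesianness hypothesis alone, these are automatic in the intended iterative application to the Bousfield-Kan cosimplicial resolution~\eqref{eq:derived_resolution}, where $\capX$ arises from iterating $\id \to \Loopt_Z\Suspt_Z$ on a $k$-connected $0$-cube and $k$-connectivity rel $Z$ is preserved by $\Loopt_Z\Suspt_Z$ for $k \geq 1$.
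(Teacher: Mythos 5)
Your proposal takes a genuinely different organizational route from the paper's. The paper's proof is a citation to \cite[1.7, 1.8]{Blomquist_Harper} together with the claim that the same arguments transport to the retractive setting; the underlying Blomquist--Harper approach, visible in the paper's detailed treatment of Propositions \ref{prop:estimates_for_exotic_proof_r_equals_1} and \ref{prop:preserving_id_plus_one_cartesian_cubes_r_equals_one}, is iterative: start from the cartesianness hypothesis on $\capX$, apply dual higher Blakers-Massey to get cocartesianness estimates for all subcubes, apply $\Suspt_Z$ (which shifts cocartesianness up by one), apply higher Blakers-Massey to return to cartesianness estimates, and finally apply $\Loopt_Z$ (shifting cartesianness down by one). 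Your proposal instead packages the suspension into a single auxiliary $(n+2)$-cube $\capZ$ built from vertexwise pushout squares, reduces the cartesianness of $\capY = (\capX \to \Loopt_Z\Suspt_Z\capX)$ to that of $\capZ$ via total fibers, and applies higher Blakers-Massey once to $\capZ$. That reduction is sound, and it potentially reorganizes the bookkeeping more cleanly than the ``bounce back and forth'' of the iterative proof. However, there are several issues with the proposal as written.

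First, your ``delicate point'' is a misconception: the vertex-wise relative connectivities $c_V \geq k$ \emph{are} part of the hypothesis. The $(k(\id+1)+1)$-cartesian condition on $\capX$ includes the $0$-subcube case $d = 0$, which says each map $\capX(V) \to Z$ is $(k+1)$-connected; by \cite[1.5]{Goodwillie_calculus_2} and the retraction $\capX(V) \to Z$, this is equivalent to $Z \to \capX(V)$ being $k$-connected, i.e., $\capX(V)$ being $k$-connected rel.\ $Z$. (Compare the paper's proof of Proposition \ref{prop:preserving_id_plus_one_cartesian_cubes_r_equals_one}: ``Now that we know the desired behavior is satisfied on 0-subcubes\ldots''.) So no appeal to the downstream Bousfield-Kan context is needed and the estimate should close in full generality.

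Second, and more substantively, the conclusion is that the $(n+1)$-cube $\capY$ is $(k(\id+1)+1)$-cartesian, which is a claim about \emph{all} $d$-subcubes of $\capY$ for $0 \leq d \leq n+1$, not only the top one. Your reduction through $\capZ$ only addresses the top-dimensional cartesianness of $\capY$. The ``mixed'' subcubes of the form $\capX' \to \Loopt_Z\Suspt_Z\capX'$ for subcubes $\capX'$ of $\capX$ can be handled by the same argument applied to $\capX'$, but the ``purely top'' subcubes, which are $\Loopt_Z\Suspt_Z$ applied to subcubes of $\capX$, require a separate deduction (for example via \cite[1.6]{Goodwillie_calculus_2} from the cartesianness of $\capX'$ and of $\capX' \to \Loopt_Z\Suspt_Z\capX'$). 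This needs to be spelled out for the proof to deliver the stated conclusion.

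Third, to apply \cite[2.5]{Goodwillie_calculus_2} to $\capZ$ you must produce cocartesianness estimates for every subface, including the faces $T \subseteq W$ whose data comes from $\capX$. But the hypothesis on $\capX$ is phrased in terms of cartesianness; converting it into the cocartesianness estimates you need requires dual higher Blakers-Massey \cite[2.6]{Goodwillie_calculus_2} applied to the subcubes of $\capX$. You gesture at this (``$l_T$ coming from the cartesianness of $\capX$'') without developing it, but note that this intermediate dual-BM step is precisely where the paper's iterative proof and your one-shot approach end up using the same toolkit. Finally, as you acknowledge, the Blakers-Massey arithmetic (the minimum over partitions and verifying that it lands exactly at $k(n+2)+1$) is left unverified; since the whole content of the proposition is this estimate, the proof is incomplete without it.
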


\begin{proof}
These estimates are proved in \cite[1.7, 1.8]{Blomquist_Harper} for the special case of $Z=*$ using higher Blakers-Massey (and its dual) \cite[2.5, 2.6]{Goodwillie_calculus_2}, together with ideas closely related to \cite{Ching_Harper_derived, Dundas, Dundas_Goodwillie_McCarthy}; and similar to the proof of Proposition \ref{prop:uniformity_estimates_low_dimension}, exactly the same arguments (and estimates) remain true in the more general context of pointed spaces centered at $Z$; see, for instance, \cite{Carr_Harper} where the analogous passage to the retractive setting is demonstrated in detail for operadic algebras in spectra.
\end{proof}

\begin{proof}[Proof of Theorem \ref{MainTheorem_BK_completion}]
To verify that $X\wequiv X^\wedge_{\Loopt^r_Z\Suspt^r_Z}$, it suffices to verify that the map of the form $X\rightarrow (\Loopt^r_Z\Suspt^r_Z)_n(X)$ into the $n$-th stage of the Bousfield-Kan completion tower has connectivity strictly increasing with $n$. The connectivity of this map is the same as the cartesian-ness of the coface $(n+1)$-cube (\cite[3.13]{Blomquist_Harper_integral_chains}) of the coaugmented Bousfield-Kan cosimplicial resolution which we calculated (Propositions \ref{prop:uniformity_estimates_low_dimension} and \ref{prop:estimates_for_spaces}) to be $(((n+1)+1)+1)=n+3$, which completes the proof.
\end{proof}

\begin{prop}
\label{prop:estimates_for_exotic_proof_r_equals_1}
Let $n\geq 1$. Let $X$ be a retractive pointed space over $Z$. If $X$ is 0-connected (rel. $Z$), then the maps
\begin{align*}
  (\Loopt_Z\Suspt_Z)^k (X)\xrightarrow{(*)_n}
  P_n^Z(\Loopt_Z\Suspt_Z)^k (X),\quad\quad
  k\geq 1
\end{align*}
are $(n+1)$-connected.
\end{prop}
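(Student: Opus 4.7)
The plan is to translate the $(n+1)$-connectivity claim into a cube-theoretic cartesianness estimate, and then to apply the retractive versions of higher Blakers-Massey and Proposition \ref{prop:estimates_for_spaces}. Throughout I interpret $P_n^Z(\Loopt_Z\Suspt_Z)^k(X)$ as $P_n^Z(\id)$ evaluated at $Y := (\Loopt_Z\Suspt_Z)^k X$; the map $(*)_n$ is then the natural coaugmentation $Y \to P_n^Z(\id)(Y)$.

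First I would verify that $Y$ is $0$-connected (rel. $Z$) via iterated application of Proposition \ref{prop:uniformity_estimates_low_dimension}: if $W$ is $0$-connected (rel. $Z$) then the comparison $W\to\Loopt_Z\Suspt_Z W$ is $1$-connected, so in particular $\Loopt_Z\Suspt_Z W$ is $0$-connected (rel. $Z$). Using the assumption $k\geq 1$, I would rewrite $Y = \Loopt_Z\Suspt_Z Y'$ for $Y' := (\Loopt_Z\Suspt_Z)^{k-1}X$, also $0$-connected (rel. $Z$), so that $\Suspt_Z Y'$ is $1$-connected (rel. $Z$), since $\Suspt_Z$ raises relative connectivity by $1$.

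Next, using the standard cube-theoretic description of $P_n^Z(\id)$ in terms of Goodwillie's $T_n$-construction, I would reduce the $(n+1)$-connectivity of $(*)_n$ to showing that an appropriate strongly cocartesian $(n+1)$-cube $\capY$ in $\SpaceZ$ with initial vertex $Y$ (built from $n+1$ copies of the null map $Y\to *'_Z$) is $(n+1)$-cartesian. Higher Blakers-Massey \cite[2.5]{Goodwillie_calculus_2}, applied to the analogous cube built on $\Suspt_Z Y'$, whose $1$-faces are now $2$-connected (rel. $Z$), gives an $(n+2)$-cartesian estimate of the type computed in Proposition \ref{prop:uniformity_estimates_low_dimension}. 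Propagating this cartesianness back down through the derived loop functor $\Loopt_Z$, using the cube-preservation template of Proposition \ref{prop:estimates_for_spaces} (which already encodes the retractive generalization of the cube-theoretic estimates), yields that $\capY$ is at least $(n+1)$-cartesian, which translates directly to the claimed $(n+1)$-connectivity of $(*)_n$, uniformly in $k\geq 1$.

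The hard part will be the cube-calculus bookkeeping in the second step: identifying the precise $(n+1)$-cube in $\SpaceZ$ whose cartesianness controls the connectivity of the coaugmentation to $P_n^Z(\id)(Y)$, and carefully tracking the at-most-one unit of connectivity lost in transporting the $(n+2)$-cartesian estimate on $\Suspt_Z Y'$ down through $\Loopt_Z$ to an $(n+1)$-cartesian estimate on $\capY$. I expect the argument to parallel closely the proofs of the analogous statements in \cite{Blomquist_Harper} for the special case $Z = *$, with the retractive generalization already captured by the setup of Propositions \ref{prop:uniformity_estimates_low_dimension} and \ref{prop:estimates_for_spaces}; see also \cite{Carr_Harper} for the analogous passage to the retractive setting in the operadic algebras context.
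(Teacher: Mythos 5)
There is a genuine gap, and it starts at the very first sentence of the proposal. You reinterpret $P_n^Z(\Loopt_Z\Suspt_Z)^k(X)$ as $P_n^Z(\id)$ evaluated at $Y:=(\Loopt_Z\Suspt_Z)^kX$. That is not what the statement means. In the paper (and in the standard Goodwillie framework), $P_n^Z(\Loopt_Z\Suspt_Z)^k$ is the $n$-excisive approximation of the \emph{functor} $(\Loopt_Z\Suspt_Z)^k$, and the map $(*)_n$ is the coaugmentation $(\Loopt_Z\Suspt_Z)^k(X)\to P_n^Z\bigl((\Loopt_Z\Suspt_Z)^k\bigr)(X)$. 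Concretely, $T_n^Z\bigl((\Loopt_Z\Suspt_Z)^k\bigr)(X)$ is a homotopy limit of $(\Loopt_Z\Suspt_Z)^k$ applied to the punctured strongly cocartesian $(n+1)$-cube built on $X$ out of $X\to *'_Z$, whereas $T_n^Z(\id)(Y)$ is a homotopy limit of the punctured cube built on $Y$. These are different objects because $(\Loopt_Z\Suspt_Z)^k$ does not commute with the fiberwise join construction; the distinction is exactly what the proof of Theorem \ref{MainTheorem_exotic} exploits when it compares $P_m^Z(\id)$ with $P_m^Z(\Loopt_Z\Suspt_Z)_n$. Once you swap in $P_n^Z(\id)(Y)$, you are proving a different statement, and in fact a weaker one: if $Y$ is only $0$-connected (rel.\ $Z$), the strongly cocartesian $(n+1)$-cube on $Y$ has $1$-connected initial $1$-faces, and higher Blakers-Massey then gives only a $1$-cartesian estimate ($1-(n+1)+(n+1)\cdot 1=1$), nowhere near $(n+1)$.

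The attempt to salvage this by writing $Y=\Loopt_Z(\Suspt_Z Y')$ and transporting a cartesianness estimate ``back down through $\Loopt_Z$'' does not work either, because the strongly cocartesian cube built on $Y$ is not $\Loopt_Z$ applied to the one built on $\Suspt_Z Y'$: the vertices $Y * U$ are not naturally identified with $\Loopt_Z\bigl((\Suspt_Z Y')*U\bigr)$. This is precisely why the paper's proof is structured the other way around --- it fixes the cube $\capX$ on $X$, then tracks what happens as the functor $(\Loopt_Z\Suspt_Z)^k$ is applied to $\capX$, alternating applications of higher Blakers-Massey and its dual to convert between cocartesian and cartesian subcube estimates at each stage. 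The hard part you defer as ``bookkeeping'' is actually the core of the argument: showing that these subcube estimates \emph{stabilize} after one pass (and hence are uniform in $k$), and that the subsequent maps $T_n^Z\to T_n^ZT_n^Z\to\cdots$ in the colimit defining $P_n^Z$ are even more connected. Your proposal does not address this iteration at all; a single appeal to Blakers-Massey on one cube gives no control as $k$ grows. Finally, Proposition \ref{prop:estimates_for_spaces} cannot serve as the ``cube-preservation template'' you invoke: it concerns the coface cubes $\capX\to\Loopt_Z^r\Suspt_Z^r\capX$ of the Bousfield-Kan resolution, not the $T_n$-cubes arising from joins, so it is not the tool for this step.
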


\begin{proof}
Consider the case of $n=1$. The 1-excisive approximation $P_1^Z(\Loopt_Z\Suspt_Z)^k(X)$ to the functor $(\Loopt_Z\Suspt_Z)^k$ on retractive pointed spaces over $Z$, evaluated at $X$, is the homotopy colimit of
\begin{align*}
  (\Loopt_Z\Suspt_Z)^k(X)\xrightarrow{(\#)_1}
  T_1^Z(\Loopt_Z\Suspt_Z)^k(X)\rightarrow
  T_1^Z(T_1^Z(\Loopt_Z\Suspt_Z)^k)(X)\rightarrow\cdots
\end{align*}
the indicated sequence (\cite[Section 1]{Goodwillie_calculus_3}); we want to estimate the connectivities of these maps. It follows, by iteratively applying higher Blakers-Massey (and its dual) \cite[2.5, 2.6]{Goodwillie_calculus_2} for $\Space$ that the maps $(\#)_1$ are 2-connected, and the other maps are higher connected. In more detail: here is the basic idea for the maps $(\#)_1$; estimates for the other (more highly connected) maps are similar. Consider the $\infty$-cocartesian 2-cube $\capX$ of the form
\begin{align*}
\xymatrix{
  X\ar[d]\ar[r] & {*'_Z}\ar[d]\\
  {*'_Z}\ar[r] & \Suspt_Z X
}
\end{align*}
in $\SpaceZ$. Since $X$ is 0-connected (rel. $Z$), we know that $*_Z\rightarrow X$ is 0-connected and hence $X\rightarrow *_Z$ is 1-connected (\cite[1.5]{Goodwillie_calculus_2}); therefore we know that $\capX$ satisfies: the 1-subcubes are 1-connected and the 2-cube is $\infty$-cocartesian. Then $\Suspt_Z\capX$ satisfies: the 1-subcubes are 2-connected and the 2-subcubes (there is only one) are $\infty$-cocartesian. By higher Blakers-Massey \cite[2.5]{Goodwillie_calculus_2} for $\Space$, we know the 2-cube is $k$-cartesian where $k$ is the minimum of
\begin{align*}
  1-2 + k_{\{1,2\}} &= -1 + \infty\\
  1-2 + k_{\{1\}} + k_{\{2\}} &= -1+2+2
\end{align*}
Hence $k=3$, our 2-cube is 3-cartesian, and $\Suspt_Z\capX$ satisfies: the 1-subcubes are 2-connected and the 2-subcubes are 3-cartesian. Then $(\Loopt_Z\Suspt_Z)\capX$ satisfies: the 1-subcubes are 1-connected and the 2-subcubes are 2-cartesian. Hence we have verifed that the map
\begin{align*}
  (\Loopt_Z\Suspt_Z)(X)\xrightarrow{(\#)_1}
  T_1^Z(\Loopt_Z\Suspt_Z)(X)
\end{align*}
is 2-connected. Let's keep going. By higher dual Blakers-Massey \cite[2.6]{Goodwillie_calculus_2} for $\Space$, we know the 2-cube is $k$-cocartesian where $k$ is the minimum of
\begin{align*}
  2-1+k_{\{1,2\}} &= 1+2\\
  2-1+k_{\{1\}}+k_{\{2\}} &= 1+1+1 
\end{align*}
Hence $k=3$, our 2-cube is 3-cocartesian, and $(\Loopt_Z\Suspt_Z)\capX$ satisfies: the 1-subcubes are 1-connected and the 2-subcubes are 3-cocartesian. Then $\Suspt_Z(\Loopt_Z\Suspt_Z)\capX$ satisfies: the 1-subcubes are 2-connected and the 2-subcubes are 4-cocartesian. By higher Blakers-Massey \cite[2.5]{Goodwillie_calculus_2} for $\Space$, we know the 2-cube is $k$-cartesian where $k$ is the minimum of
\begin{align*}
  1-2 + k_{\{1,2\}} &= -1 + 4\\
  1-2 + k_{\{1\}} + k_{\{2\}} &= -1+2+2
\end{align*}
Hence, $k=3$, our 2-cube is 3-cartesian, and $\Suspt_Z(\Loopt_Z\Suspt_Z)\capX$ satisfies: the 1-subcubes are 2-connected and the 2-subcubes are 3-cartesian. Then $(\Loopt_Z\Suspt_Z)^2\capX$ satisfies: the 1-subcubes are 1-connected and the 2-subcubes are 2-cartesian. Hence we have verifed that the map
\begin{align*}
  (\Loopt_Z\Suspt_Z)^2(X)\xrightarrow{(\#)_1}
  T_1^Z(\Loopt_Z\Suspt_Z)^2(X)
\end{align*}
is 2-connected. Let's keep going. By higher dual Blakers-Massey \cite[2.6]{Goodwillie_calculus_2} for $\Space$, we know the 2-cube is $k$-cocartesian where $k$ is the minimum of
\begin{align*}
  2-1+k_{\{1,2\}} &= 1+2\\
  2-1+k_{\{1\}}+k_{\{2\}} &= 1+1+1 
\end{align*}
Hence $k=3$, our 2-cube is 3-cocartesian, and $(\Loopt_Z\Suspt_Z)^2\capX$ satisfies: the 1-subcubes are 1-connected and the 2-subcubes are 3-cocartesian. Then $\Suspt_Z(\Loopt_Z\Suspt_Z)^2\capX$ satisfies: the 1-subcubes are 2-connected and the 2-subcubes are 4-cocartesian. By higher Blakers-Massey \cite[2.5]{Goodwillie_calculus_2} for $\Space$, we know the 2-cube is $k$-cartesian where $k$ is the minimum of
\begin{align*}
  1-2 + k_{\{1,2\}} &= -1 + 4\\
  1-2 + k_{\{1\}} + k_{\{2\}} &= -1+2+2
\end{align*}
Hence, $k=3$, our 2-cube is 3-cartesian, and $\Suspt_Z(\Loopt_Z\Suspt_Z)^2\capX$ satisfies: the 1-subcubes are 2-connected and the 2-subcubes are 3-cartesian. Then $(\Loopt_Z\Suspt_Z)^3\capX$ satisfies: the 1-subcubes are 1-connected and the 2-subcubes are 2-cartesian. Hence we have verifed that the map
\begin{align*}
  (\Loopt_Z\Suspt_Z)^3(X)\xrightarrow{(\#)_1}
  T_1^Z(\Loopt_Z\Suspt_Z)^3(X)
\end{align*}
is 2-connected; notice how the subcube estimates have stabilized at each respective step. And so forth. Hence it follows that the maps
\begin{align*}
  (\Loopt_Z\Suspt_Z)^k(X)\xrightarrow{(\#)_1}
  T_1^Z(\Loopt_Z\Suspt_Z)^k(X),\quad\quad
  k\geq 1
\end{align*}
are 2-connected. Consider the case of $n=2$. The 2-excisive approximation $P_2^Z(\Loopt_Z\Suspt_Z)^k(X)$ to the functor $(\Loopt_Z\Suspt_Z)^k$ on retractive pointed spaces over $Z$, evaluated at $X$, is the homotopy colimit of
\begin{align*}
  (\Loopt_Z\Suspt_Z)^k(X)\xrightarrow{(\#)_2}
  T_2^Z(\Loopt_Z\Suspt_Z)^k(X)\rightarrow
  T_2^Z(T_2^Z(\Loopt_Z\Suspt_Z)^k)(X)\rightarrow\cdots
\end{align*}
the indicated sequence (\cite[Section 1]{Goodwillie_calculus_3}); we want to estimate the connectivities of these maps. It follows, by iteratively applying higher Blakers-Massey (and its dual) \cite[2.5, 2.6]{Goodwillie_calculus_2} for $\Space$ that the maps $(\#)_2$ are 3-connected, and the other maps are higher connected. In more detail: here is the basic idea for the maps $(\#)_2$; estimates for the other (more highly connected) maps are similar. Consider a strongly $\infty$-cocartesian 3-cube $\capX$ satisfying: the 1-subcubes are 1-connected, the 2-subcubes are $\infty$-cocartesian, and the 3-subcubes (there is only one) are $\infty$-cocartesian. Then $\Suspt_Z\capX$ satisfies: the 1-subcubes are 2-connected, the 2-subcubes are $\infty$-cocartesian, and the 3-subcubes are $\infty$-cocartesian. By higher Blakers-Massey \cite[2.5]{Goodwillie_calculus_2} for $\Space$, we know the 3-cube is $k$-cartesian where $k$ is the minimum of
\begin{align*}
  1-3+k_{\{1,2,3\}} &= -2+\infty \\
  1-3+k_{\{1,2\}}+k_{\{3\}} &= -2+\infty+2\\
  1-3+k_{\{1\}}+k_{\{2\}}+k_{\{3\}} &= -2+2+2+2
\end{align*}
Hence $k=4$, our 3-cube is 4-cartesian, and $\Suspt_Z\capX$ satisfies: the 1-subcubes are 2-connected, the 2-subcubes are 3-cartesian, and the 3-subcubes are 4-cartesian. Then $(\Loopt_Z\Suspt_Z)\capX$ satisfies: the 1-subcubes are 1-connected, the 2-subcubes are 2-cartesian, and the 3-subcubes are 3-cartesian. Hence we have verifed that the map
\begin{align*}
  (\Loopt_Z\Suspt_Z)(X)\xrightarrow{(\#)_2}
  T_2^Z(\Loopt_Z\Suspt_Z)(X)
\end{align*}
is 3-connected. Let's keep going. By higher dual Blakers-Massey \cite[2.6]{Goodwillie_calculus_2} for $\Space$, we know the 3-cube is $k$-cocartesian where $k$ is the minimum of
\begin{align*}
  3-1+k_{\{1,2,3\}} &=  2+3\\
  3-1+k_{\{1,2\}}+k_{\{3\}} &= 2+2+1\\
  3-1+k_{\{1\}}+k_{\{2\}}+k_{\{3\}} &=2+1+1+1 
\end{align*}
Hence $k=5$, our 3-cube is 5-cocartesian, and $(\Loopt_Z\Suspt_Z)\capX$ satisfies: the 1-subcubes are 1-connected, the 2-subcubes are 3-cocartesian, and the 3-subcubes are 5-cocartesian. Then $\Suspt_Z(\Loopt_Z\Suspt_Z)\capX$ satisfies: the 1-subcubes are 2-connected, the 2-subcubes are 4-cocartesian, and the 3-subcubes are 6-cocartesian. By higher Blakers-Massey \cite[2.5]{Goodwillie_calculus_2} for $\Space$, we know the 3-cube is $k$-cartesian where $k$ is the minimum of
\begin{align*}
  1-3+k_{\{1,2,3\}} &= -2+6 \\
  1-3+k_{\{1,2\}}+k_{\{3\}} &= -2+4+2\\
  1-3+k_{\{1\}}+k_{\{2\}}+k_{\{3\}} &= -2+2+2+2
\end{align*}
Hence, $k=4$, our 3-cube is 4-cartesian, and $\Suspt_Z(\Loopt_Z\Suspt_Z)\capX$ satisfies: the 1-subcubes are 2-connected, the 2-subcubes are 3-cartesian, and the 3-subcubes are 4-cartesian. Then $(\Loopt_Z\Suspt_Z)^2\capX$ satisfies: the 1-subcubes are 1-connected, the 2-subcubes are 2-cartesian, and the 3-subcubes are 3-cartesian. Hence we have verifed that the map
\begin{align*}
  (\Loopt_Z\Suspt_Z)^2(X)\xrightarrow{(\#)_2}
  T_2^Z(\Loopt_Z\Suspt_Z)^2(X)
\end{align*}
is 3-connected. Let's keep going. By higher dual Blakers-Massey \cite[2.6]{Goodwillie_calculus_2} for $\Space$, we know the 3-cube is $k$-cocartesian where $k$ is the minimum of
\begin{align*}
  3-1+k_{\{1,2,3\}} &=  2+3\\
  3-1+k_{\{1,2\}}+k_{\{3\}} &= 2+2+1\\
  3-1+k_{\{1\}}+k_{\{2\}}+k_{\{3\}} &=2+1+1+1 
\end{align*}
Hence $k=5$, our 3-cube is 5-cocartesian, and $(\Loopt_Z\Suspt_Z)^2\capX$ satisfies: the 1-subcubes are 1-connected, the 2-subcubes are 3-cocartesian, and the 3-subcubes are 5-cocartesian. Then $\Suspt_Z(\Loopt_Z\Suspt_Z)^2\capX$ satisfies: the 1-subcubes are 2-connected, the 2-subcubes are 4-cocartesian, and the 3-subcubes are 6-cocartesian. By higher Blakers-Massey \cite[2.5]{Goodwillie_calculus_2} for $\Space$, we know the 3-cube is $k$-cartesian where $k$ is the minimum of
\begin{align*}
  1-3+k_{\{1,2,3\}} &= -2+6 \\
  1-3+k_{\{1,2\}}+k_{\{3\}} &= -2+4+2\\
  1-3+k_{\{1\}}+k_{\{2\}}+k_{\{3\}} &= -2+2+2+2
\end{align*}
Hence, $k=4$, our 3-cube is 4-cartesian, and $\Suspt_Z(\Loopt_Z\Suspt_Z)^2\capX$ satisfies: the 1-subcubes are 2-connected, the 2-subcubes are 3-cartesian, and the 3-subcubes are 4-cartesian. Then $(\Loopt_Z\Suspt_Z)^3\capX$ satisfies: the 1-subcubes are 1-connected, the 2-subcubes are 2-cartesian, and the 3-subcubes are 3-cartesian. Hence we have verifed that the map
\begin{align*}
  (\Loopt_Z\Suspt_Z)^3(X)\xrightarrow{(\#)_2}
  T_2^Z(\Loopt_Z\Suspt_Z)^3(X)
\end{align*}
is 3-connected; notice how the subcube estimates have stabilized at each respective step. And so forth. Hence it follows that the maps
\begin{align*}
  (\Loopt_Z\Suspt_Z)^k(X)\xrightarrow{(\#)_2}
  T_2^Z(\Loopt_Z\Suspt_Z)^k(X),\quad\quad
  k\geq 1
\end{align*}
are 3-connected. And so forth.
\end{proof}

\begin{prop}
\label{prop:estimates_for_exotic}
Let $n\geq 1$ and $1\leq r\leq\infty$. Let $X$ be a retractive pointed space over $Z$. If $X$ is 0-connected (rel. $Z$), then the maps
\begin{align*}
  (\Loopt^r_Z\Suspt^r_Z)^k (X)\xrightarrow{(*)_n}
  P_n^Z(\Loopt^r_Z\Suspt^r_Z)^k (X),\quad\quad
  k\geq 1
\end{align*}
are $(n+1)$-connected.
\end{prop}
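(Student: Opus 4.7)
The plan is to follow the structure of the proof of Proposition \ref{prop:estimates_for_exotic_proof_r_equals_1} line-by-line, substituting $\Suspt_Z^r$ and $\Loopt_Z^r$ for $\Suspt_Z$ and $\Loopt_Z$ throughout. As in that proof, the connectivity of the map $F(X) \to T_n^Z F(X)$ coincides with the cartesian-ness of the $(n+1)$-cube $F(\capX)$, where $\capX$ is the strongly $\infty$-cocartesian $(n+1)$-cube with $\capX(\emptyset) = X$ and remaining vertices $*'_Z$; hence (noting that the higher terms in the sequential homotopy colimit defining $P_n^Z$ will yield strictly higher connectivity by the same arguments) it suffices to show that $(\Loopt_Z^r\Suspt_Z^r)^k(\capX)$ is $(n+1)$-cartesian for every $k\geq 1$, with this estimate stable under further iteration.

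First, we carry out the \emph{initialization} step: a single application of $\Loopt_Z^r\Suspt_Z^r$ to $\capX$. Since $\Suspt_Z^r$ preserves $\infty$-cocartesian-ness of subcubes and shifts the 1-subcube connectivity up by $r$ (from $1$-connected to $(r+1)$-connected), higher Blakers-Massey \cite[2.5]{Goodwillie_calculus_2} for $\Space$ gives that each $j$-subcube of $\Suspt_Z^r(\capX)$ is $(1+jr)$-cartesian, and $\Loopt_Z^r$ then returns us to a cube whose 1-subcubes are $1$-connected and whose $j$-subcubes are $(1+(j-1)r)$-cartesian. Since $(j-1)(r-1)\geq 0$ for $j,r\geq 1$ gives $(j-1)r+1\geq j$, the $(n+1)$-cube is in particular $(n+1)$-cartesian.

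Next, we perform the \emph{stabilization} step to handle $k\geq 2$. Following the cyclic pattern from the proof of Proposition \ref{prop:estimates_for_exotic_proof_r_equals_1}, we apply higher dual Blakers-Massey \cite[2.6]{Goodwillie_calculus_2} to promote the cartesian estimates to cocartesian ones, then $\Suspt_Z^r$ (shifting all estimates up by $r$), then higher Blakers-Massey, then $\Loopt_Z^r$. A direct case analysis of the partition minima at each step shows that, after a complete round, the subcube estimates are no weaker than the input, with the stable pattern ``1-subcubes $1$-connected; $j$-subcubes at least $j$-cartesian'' persisting indefinitely. This is consistent with the $r$-independent estimates in Propositions \ref{prop:uniformity_estimates_low_dimension} and \ref{prop:estimates_for_spaces}. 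The case $r=\infty$ is obtained exactly as in the proof of Proposition \ref{prop:uniformity_estimates_low_dimension}, via the standard passage to stabilization on $\SpectraN(\SpaceZ)$.

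The main obstacle will be the bookkeeping in the Blakers-Massey minima for general $r$: the intermediate cocartesian and cartesian estimates carry extra factors of $r$, and one must confirm at each step that the minimum is attained at a partition whose contribution, after the final $\Loopt_Z^r$, reproduces the stable pattern rather than degrading it. This reduces to a small family of elementary inequalities in $j$ and $r$ of the general shape already seen in the initialization step. The analogous retractive passage has been executed in detail in \cite{Carr_Harper} for operadic algebras in spectra, and we would follow the same template here.
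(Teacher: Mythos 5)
Your proposal is correct and follows essentially the same route as the paper: the paper's own proof of this proposition simply reduces to the $r=1$ case (Proposition \ref{prop:estimates_for_exotic_proof_r_equals_1}) and asserts the remaining finite $r$ are similar, handling $r=\infty$ by running the same cubical argument directly in $\SpectraN(\SpaceZ)$ using that $\Suspt_Z^\infty$, $\Loopt_Z^\infty$ respectively preserve cocartesian and cartesian cubes together with the stable estimates of \cite[3.10]{Ching_Harper}. Your explicit tracking of the $r$-dependence in the initialization step (the $(1+(j-1)r)$-cartesian estimate, pinned down by $(j-1)(r-1)\geq 0$) usefully fills in what the paper leaves implicit; the one mild divergence is your treatment of $r=\infty$ by passage to a homotopy colimit as in Proposition \ref{prop:uniformity_estimates_low_dimension}, which for the iterates $(\Loopt_Z^\infty\Suspt_Z^\infty)^k$ and their $P_n^Z$-approximations requires some extra care to justify commuting the colimit past these constructions, whereas the paper's direct stable argument sidesteps this.
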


\begin{proof}
A detailed proof of the $r=1$ case is given above (Proposition \ref{prop:estimates_for_exotic_proof_r_equals_1}), and the other cases are similar. In the case of $r=\infty$, several of the steps are easier since $\Suspt_Z^\infty$ preserves cocartesian-ness, $\Loopt_Z^\infty$ preserves cartesian-ness, and the stable estimates in \cite[3.10]{Ching_Harper} are available for each estimate step following the application of $\Suspt_Z^\infty$.
\end{proof}

\begin{prop}
\label{prop:connectivities_eventually_increase}
Let $n\geq 1$. Let $X$ be a retractive pointed space over $Z$. If $X$ is 0-connected (rel. $Z$), then the maps
\begin{align*}
  (\Loopt_Z\Suspt_Z)_k (X)\xrightarrow{(**)_n}
  P_{n+k}^Z(\Loopt_Z\Suspt_Z)_k (X),\quad\quad
  k\geq 0
\end{align*}
are $(n+1)$-connected.
\end{prop}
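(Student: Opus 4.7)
The plan is to bootstrap from Proposition \ref{prop:estimates_for_exotic} by a cosimplicial resolution argument, exploiting the fact that the $k$ units of connectivity lost in passing to the truncated homotopy limit $\holim_{\Delta^{\leq k}}$ are precisely compensated by the $k$ extra units of excisive accuracy built into $P_{n+k}^Z$ versus $P_n^Z$. This is why the bound $(n+1)$ emerges uniformly in $k$.

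First, I would apply Proposition \ref{prop:estimates_for_exotic} with $n$ replaced by $n+k$ (permitted since $n+k\geq 1$): for every cosimplicial degree $j\geq 0$, the comparison map
\begin{align*}
  (\Loopt_Z\Suspt_Z)^{j+1}(X)\rightarrow P_{n+k}^Z(\Loopt_Z\Suspt_Z)^{j+1}(X)
\end{align*}
is $(n+k+1)$-connected. These assemble into a morphism of coaugmented cosimplicial resolutions. Taking $\holim_{\Delta^{\leq k}}$, the source identifies by definition with $(\Loopt_Z\Suspt_Z)_k(X)$; and the standard coface $(k+1)$-cube connectivity estimate for truncated cosimplicial totalizations (exactly as in the proof of Theorem \ref{MainTheorem_BK_completion}, via \cite[3.13]{Blomquist_Harper_integral_chains} and \cite[5.22]{Blomquist_Harper}) converts a uniformly $(n+k+1)$-connected degreewise map into an induced map on $\holim_{\Delta^{\leq k}}$ of connectivity at least $(n+k+1)-k=(n+1)$.

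To finish, I would identify the target $\holim_{\Delta^{\leq k}}P_{n+k}^Z(\Loopt_Z\Suspt_Z)^{\bullet+1}(X)$ with $P_{n+k}^Z(\Loopt_Z\Suspt_Z)_k(X)$ via the natural comparison map. This is the main obstacle: $(n+k)$-excisive functors do not a priori commute with arbitrary homotopy limits. However, the truncated totalization $\holim_{\Delta^{\leq k}}$ is built from a finite homotopy-limit diagram whose effective dimension is matched to the excisive degree $n+k$, so one expects the natural comparison to be either a weak equivalence or at least of connectivity well exceeding $n+1$. If making this identification precise turns out to be delicate, the fallback is to mimic the iterated Blakers-Massey and dual Blakers-Massey cube chase of Proposition \ref{prop:estimates_for_exotic_proof_r_equals_1} directly, with $(\Loopt_Z\Suspt_Z)_k$ playing the role of $(\Loopt_Z\Suspt_Z)^k$ throughout; this alternative is longer but entirely mechanical given the estimates already established.
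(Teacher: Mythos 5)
Your proposal is, at its core, the same argument the paper makes, but organized as a systematic totalization estimate rather than an explicit cube chase. Both proofs need exactly two inputs: (i) the levelwise estimate of Proposition \ref{prop:estimates_for_exotic} for the maps $(*)_{n+k}$, and (ii) the fact that $P_{m}^Z$ preserves $\infty$-cartesian cubes, equivalently finite homotopy limits. The paper uses (ii) by citing \cite[1.7]{Goodwillie_calculus_3} to conclude that the $(k+2)$-cube $\capX\rightarrow P_{n+k}^Z\capX$ built on the coface $(k+1)$-cube is $\infty$-cartesian, and then extracts the connectivity of $(**)_n$ by peeling off faces with \cite[1.6]{Goodwillie_calculus_2}. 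You use (ii) to commute $P_{n+k}^Z$ past $\holim_{\Delta^{\leq k}}$, and then invoke the standard ``$\Tot_k$ of a levelwise $m$-connected map is $(m-k)$-connected'' estimate. These are two packagings of the same calculation; yours is arguably cleaner conceptually, while the paper's is more hands-on.

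Your main worry --- that the commutation $\holim_{\Delta^{\leq k}} P_{n+k}^Z \simeq P_{n+k}^Z \holim_{\Delta^{\leq k}}$ might be ``delicate'' --- is misplaced. The needed fact is precisely preservation of finite homotopy limits (equivalently $\infty$-cartesian cubes) by $P_m^Z$, which is \cite[1.7]{Goodwillie_calculus_3} and which the paper relies on as well, just phrased cube-wise. You should make this citation explicit rather than leaving the step as a conditional gap, and you should then drop the ``fallback'': redoing the Blakers--Massey/dual Blakers--Massey chase of Proposition \ref{prop:estimates_for_exotic_proof_r_equals_1} with $(\Loopt_Z\Suspt_Z)_k$ in place of $(\Loopt_Z\Suspt_Z)^k$ is a fundamentally different and much longer argument, and it is not what the paper does; the paper's cube chase here uses only \cite[1.6, 1.7]{Goodwillie_calculus_2, Goodwillie_calculus_3}, not (dual) Blakers--Massey. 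Finally, a minor point on references: you cite \cite[3.13]{Blomquist_Harper_integral_chains} and \cite[5.22]{Blomquist_Harper} for the truncated-totalization connectivity loss, but those results concern the coface-cube cartesian-ness of a \emph{coaugmented} cosimplicial diagram, which is a related but different statement from the ``levelwise $m$-connected implies $(m-k)$-connected on $\holim_{\Delta^{\leq k}}$'' estimate you actually invoke; the estimate you want is standard, but deserves a more precise justification or citation.
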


\begin{proof}
Consider the case of $k=0$. Then the map $(**)_n$ is $(n+1)$-connected by Proposition \ref{prop:estimates_for_exotic}. Consider the case of $k=1$. By definition, $(\Loopt_Z\Suspt_Z)_1 X$ fits into an $\infty$-cartesian 2-cube of the form (\cite[5.26]{Blomquist_Harper})
\begin{align*}
\xymatrix{
  (\Loopt_Z\Suspt_Z)_1 X\ar[d]\ar[r] & (\Loopt_Z\Suspt_Z) X\ar[d]\\
  (\Loopt_Z\Suspt_Z) X\ar[r] & (\Loopt_Z\Suspt_Z)^2 X
}
\end{align*}
and therefore the map $(**)_n$ fits into a 3-cube of the form
\begin{align}
\label{eq:nice_3_cube_picture}
\xymatrix{
(\Loopt_Z\Suspt_Z)_1 X\ar[dd]\ar[rr]\ar[dr]^-{(**)_n} &&
(\Loopt_Z\Suspt_Z) X\ar'[d][dd]\ar[dr]^-{(*)_{n+1}}\\
&P_{n+1}^Z(\Loopt_Z\Suspt_Z)_1 X\ar[dd]\ar[rr] &&
P_{n+1}^Z(\Loopt_Z\Suspt_Z) X\ar[dd]\\
(\Loopt_Z\Suspt_Z) X\ar'[r][rr]\ar[dr]^-{(*)_{n+1}} &&
(\Loopt_Z\Suspt_Z)^2 X\ar[dr]^-{(*)_{n+1}}\\
&P_{n+1}^Z(\Loopt_Z\Suspt_Z) X\ar[rr] &&
P_{n+1}^Z(\Loopt_Z\Suspt_Z)^2 X
}
\end{align}
Several applications of \cite[1.6]{Goodwillie_calculus_2} show that the map $(**)_n$ is $(n+1)$-connected. In more detail: the back 2-face is $\infty$-cartesian, hence the front 2-face is $\infty$-cartesian (\cite[1.7]{Goodwillie_calculus_3}). Therefore, the 3-cube is $\infty$-cartesian. By Proposition \ref{prop:estimates_for_exotic}, the maps $(*)_{n+1}$ are $(n+2)$-connected, hence the right-hand 2-face is $(n+1)$-cartesian. Since the 3-cube is $\infty$-cartesian, we therefore know the left-hand 2-face is $(n+1)$-cartesian; and hence, since the map $(*)_{n+1}$ is $(n+2)$-connected, therefore we know the map $(**)_n$ is $(n+1)$-connected. Consider the case of $k=2$. By definition, $(\Loopt_Z\Suspt_Z)_2 X$ fits into an $\infty$-cartesian 3-cube $\capX$ of the form

\begin{align*}
\xymatrix{
(\Loopt_Z\Suspt_Z)_2 X\ar[dd]\ar[rr]\ar[dr] &&
(\Loopt_Z\Suspt_Z) X\ar'[d][dd]\ar[dr]\\
&(\Loopt_Z\Suspt_Z) X\ar[dd]\ar[rr] &&
(\Loopt_Z\Suspt_Z)^2 X\ar[dd]\\
(\Loopt_Z\Suspt_Z) X\ar'[r][rr]\ar[dr] &&
(\Loopt_Z\Suspt_Z)^2 X\ar[dr]\\
&(\Loopt_Z\Suspt_Z)^2 X\ar[rr] &&
(\Loopt_Z\Suspt_Z)^3 X
}
\end{align*}
and therefore the map $(**)_n$ fits into a 4-cube of the form $\capX\rightarrow P_{n+2}^Z\capX$. Several applications of \cite[1.6]{Goodwillie_calculus_2} show that the map $(**)_n$ is $(n+1)$-connected. In more detail: $\capX$ is $\infty$-cartesian, hence  $P_{n+2}^Z\capX$ is $\infty$-cartesian (\cite[1.7]{Goodwillie_calculus_3}). Therefore, the 4-cube is $\infty$-cartesian. Consider the 3-face of the form

\begin{align*}
\xymatrix{
(\Loopt_Z\Suspt_Z) X\ar[dd]\ar[rr]^-{(*)_{n+2}}\ar[dr] &&
P_{n+2}^Z(\Loopt_Z\Suspt_Z) X\ar'[d][dd]\ar[dr]\\
&(\Loopt_Z\Suspt_Z)^2 X\ar[dd]\ar[rr]^(0.3){(*)_{n+2}} &&
P_{n+2}^Z(\Loopt_Z\Suspt_Z)^2 X\ar[dd]\\
(\Loopt_Z\Suspt_Z)^2 X\ar'[r][rr]^-{(*)_{n+2}}\ar[dr] &&
P_{n+2}^Z(\Loopt_Z\Suspt_Z)^2 X\ar[dr]\\
&(\Loopt_Z\Suspt_Z)^3 X\ar[rr]^-{(*)_{n+2}} &&
P_{n+2}^Z(\Loopt_Z\Suspt_Z)^3 X
}
\end{align*}
By Proposition \ref{prop:estimates_for_exotic}, the maps $(*)_{n+2}$ are $(n+3)$-connected, hence the top and bottom 2-faces are $(n+2)$-cartesian, and therefore the 3-face is $(n+1)$-cartesian. Since the 4-cube is $\infty$-cartesian, we therefore know the opposite 3-face of the form
\begin{align*}
\xymatrix{
(\Loopt_Z\Suspt_Z)_2 X\ar[dd]\ar[rr]^-{(**)_n}\ar[dr] &&
P_{n+2}^Z(\Loopt_Z\Suspt_Z)_2 X\ar'[d][dd]\ar[dr]\\
&(\Loopt_Z\Suspt_Z) X\ar[dd]\ar[rr]^(0.3){(*)_{n+2}} &&
P_{n+2}^Z(\Loopt_Z\Suspt_Z) X\ar[dd]\\
(\Loopt_Z\Suspt_Z) X\ar'[r][rr]^-{(*)_{n+2}}\ar[dr] &&
P_{n+2}^Z(\Loopt_Z\Suspt_Z) X\ar[dr]\\
&(\Loopt_Z\Suspt_Z)^2 X\ar[rr]^-{(*)_{n+2}} &&
P_{n+2}^Z(\Loopt_Z\Suspt_Z)^2 X
}
\end{align*}
is $(n+1)$-cartesian; and hence, since the maps $(*)_{n+2}$ are $(n+3)$-connected, we know the bottom 2-face is $(n+2)$-cartesian, it follows that the top face is $(n+1)$-cartesian, and since the map $(*)_{n+2}$ is $(n+3)$-connected, therefore we know the map $(**)_n$ is $(n+1)$-connected. The other cases similarly follow by repeated applications of \cite[1.6]{Goodwillie_calculus_2}.
\end{proof}

\begin{prop}
\label{prop:connectivities_eventually_increase_general_r}
Let $n\geq 1$ and $1\leq r\leq \infty$. Let $X$ be a retractive pointed space over $Z$. If $X$ is 0-connected (rel. $Z$), then the maps
\begin{align*}
  (\Loopt_Z^r\Suspt_Z^r)_k (X)\xrightarrow{(**)_n}
  P_{n+k}^Z(\Loopt_Z^r\Suspt_Z^r)_k (X),\quad\quad
  k\geq 0
\end{align*}
are $(n+1)$-connected.
\end{prop}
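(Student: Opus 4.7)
The plan is to reproduce the argument of Proposition~\ref{prop:connectivities_eventually_increase} essentially verbatim, with $(\Loopt_Z\Suspt_Z)$ replaced everywhere by $(\Loopt_Z^r\Suspt_Z^r)$. The key point is that the only $r$-dependent input used there is the connectivity estimate for the Taylor-approximation comparison map $(*)_{n+k}$, and Proposition~\ref{prop:estimates_for_exotic} already supplies this estimate uniformly for all $1\leq r\leq\infty$; the remaining bookkeeping is purely formal manipulation of cartesian cubes, which proceeds without any sensitivity to the choice of $r$.

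First I would dispense with the base case $k=0$: since $(\Loopt_Z^r\Suspt_Z^r)_0 X\simeq (\Loopt_Z^r\Suspt_Z^r) X$, the map $(**)_n$ coincides with $(*)_n$, which is $(n+1)$-connected by Proposition~\ref{prop:estimates_for_exotic}.

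For $k\geq 1$ I would proceed exactly as in the proof of Proposition~\ref{prop:connectivities_eventually_increase}. By definition (\cite[5.26]{Blomquist_Harper}) the object $(\Loopt_Z^r\Suspt_Z^r)_k X$ sits as the initial vertex of an $\infty$-cartesian $(k+1)$-cube $\capX$ whose remaining vertices are iterates $(\Loopt_Z^r\Suspt_Z^r)^j X$ with $1\leq j\leq k+1$. Applying $P_{n+k}^Z$ levelwise I would form the $(k+2)$-cube $\capX\rightarrow P_{n+k}^Z\capX$: the back $(k+1)$-face is $\infty$-cartesian by construction, the front $(k+1)$-face is $\infty$-cartesian by \cite[1.7]{Goodwillie_calculus_3}, and therefore the ambient $(k+2)$-cube is $\infty$-cartesian. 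Every edge of this ambient cube that does not emanate from the distinguished vertex $(\Loopt_Z^r\Suspt_Z^r)_k X$ is of the form $(*)_{n+k}$ applied to some iterate of $X$, and Proposition~\ref{prop:estimates_for_exotic} guarantees each such map is $(n+k+1)$-connected.

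Repeated application of \cite[1.6]{Goodwillie_calculus_2} then propagates cartesian-ness and connectivity across opposite sub-faces exactly as in the explicit $k=1,2$ calculations carried out for Proposition~\ref{prop:connectivities_eventually_increase}: the sub-face avoiding the distinguished vertex is sufficiently cartesian thanks to the $(n+k+1)$-connectivity of its $(*)_{n+k}$ edges, the $\infty$-cartesian-ness of the ambient cube forces the opposite sub-face to be sufficiently cartesian, and combining this with the connectivity of the remaining $(*)_{n+k}$ edges yields $(n+1)$-connectivity of $(**)_n$. I do not anticipate any genuine obstruction, since the combinatorics is identical to the $r=1$ case; the only point that deserves a moment's attention is the case $r=\infty$, where $\Suspt_Z^\infty$ and $\Loopt_Z^\infty$ land in, or come from, Hovey spectra on $\SpaceZ$, but this is already absorbed into Proposition~\ref{prop:estimates_for_exotic} via the stable estimates of \cite[3.10]{Ching_Harper}, so no new work is required here.
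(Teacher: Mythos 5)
Your proposal is correct and takes essentially the same route as the paper, which likewise reduces the general $r$ to the $r=1$ case of Proposition~\ref{prop:connectivities_eventually_increase} and observes that the only $r$-dependent input, the $(n+1)$-connectivity of the maps $(*)_n$, is supplied uniformly by Proposition~\ref{prop:estimates_for_exotic}. One small imprecision: it is not \emph{every} edge avoiding the distinguished vertex that is of the form $(*)_{n+k}$, only the ``diagonal'' edges in the new $P_{n+k}^Z$-direction; the edges within $\capX$ and within $P_{n+k}^Z\capX$ are coface maps, but this does not affect the argument since only the diagonal edges and the cartesian-ness of the faces enter the iterated applications of \cite[1.6]{Goodwillie_calculus_2}.
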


\begin{proof}
A detailed proof of the $r=1$ case is given above (Proposition \ref{prop:connectivities_eventually_increase}), and the other cases are similar; the estimates are identical (Proposition \ref{prop:estimates_for_exotic}).
\end{proof}

\begin{proof}[Proof of Theorem \ref{MainTheorem_exotic}]
We follow the basic proof strategy in \cite{Schonsheck_TQ}, and the subsequent development in \cite{Carr_Harper}, for operadic algebras in spectra (where the estimates are different). Here is the basic idea.  Consider the case of $r=1$. We start with the Bousfield-Kan completion tower of the form
\begin{align*}
\xymatrix{
  (\Loopt_Z\Suspt_Z)_0 &
  (\Loopt_Z\Suspt_Z)_1\ar[l] &
  (\Loopt_Z\Suspt_Z)_2\ar[l]\cdots
}
\end{align*}
and resolve each term by its Taylor tower to produce the tower of towers diagram
\begin{align}
\label{eq:tower_of_towers_exotic}
\xymatrix{
  P_3^Z(\Loopt_Z\Suspt_Z)_0(X)\ar[d] &
  P_3^Z(\Loopt_Z\Suspt_Z)_1(X)\ar[l]\ar[d] &
  P_3^Z(\Loopt_Z\Suspt_Z)_2(X)\ar[l]\ar[d]\cdots\\
  P_2^Z(\Loopt_Z\Suspt_Z)_0(X)\ar[d] &
  P_2^Z(\Loopt_Z\Suspt_Z)_1(X)\ar[l]\ar[d] &
  P_2^Z(\Loopt_Z\Suspt_Z)_2(X)\ar[l]\ar[d]\cdots\\
  P_1^Z(\Loopt_Z\Suspt_Z)_0(X) &
  P_1^Z(\Loopt_Z\Suspt_Z)_1(X)\ar[l] &
  P_1^Z(\Loopt_Z\Suspt_Z)_2(X)\ar[l]\cdots
}
\end{align}
By our uniformity estimates (Propositions \ref{prop:uniformity_estimates_low_dimension} and \ref{prop:estimates_for_spaces}), it follows immediately that $\id\rightarrow(\Loopt_Z\Suspt_Z)_n$ satisfies $O_{n+1}$ (\cite[1.2]{Goodwillie_calculus_3}) for each $n\geq 0$; in other words, via this map the functors $\id$ and $(\Loopt_Z\Suspt_Z)_n$ agree to order $n+1$ and hence the maps
\begin{align*}
  P_{m}^Z(\id)(X)&\xrightarrow{\wequiv} P_{m}^Z(\Loopt_Z\Suspt_Z)_n(X),
  \quad\quad
  1\leq m\leq n+1
\end{align*}
are weak equivalences for every $n\geq 0$. It follows that $\holim$ applied horizontally produces the Taylor tower of the identity functor $\{P_n^Z(\id)\}$ and hence applying $\holim$ first horizontally and then vertically produces
\begin{align*}
  \holim_{\mathsf{vert}}\holim_{\mathsf{horiz}}
  \eqref{eq:tower_of_towers_exotic}\wequiv P_\infty^Z(\id)(X)
\end{align*}
What about the other way? By our estimates (Proposition \ref{prop:connectivities_eventually_increase_general_r}), it follows immediately that $\holim$ applied vertically produces the Bousfield-Kan completion tower
\begin{align*}
\xymatrix{
  (\Loopt_Z\Suspt_Z)_0(X) &
  (\Loopt_Z\Suspt_Z)_1(X)\ar[l] &
  (\Loopt_Z\Suspt_Z)_2(X)\ar[l]\cdots
}
\end{align*}
and hence applying $\holim$ first vertically and then horizontally produces
\begin{align*}
  \holim_{\mathsf{horiz}}\holim_{\mathsf{vert}}
  \eqref{eq:tower_of_towers_exotic}\wequiv X^\wedge_{\Loopt_Z\Suspt_Z}
\end{align*}
Hence we have verified that
\begin{align*}
  P_\infty^Z(\id)(X)\wequiv X^\wedge_{\Loopt_Z\Suspt_Z}
\end{align*}
The other cases are similar; the estimates are identical (Proposition \ref{prop:connectivities_eventually_increase_general_r}).
\end{proof}

\begin{prop}
\label{prop:preserving_id_plus_one_cartesian_cubes_r_equals_one}
If $n\geq -1$, then $(\Loopt_Z\Suspt_Z)$ preserves $(\id+1)$-cartesian $(n+1)$-cubes in $\SpaceZ$.
\end{prop}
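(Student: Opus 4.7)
The plan is to adapt the iterative Blakers-Massey and dual Blakers-Massey argument that was carried out explicitly for $2$-cubes and $3$-cubes in Proposition \ref{prop:estimates_for_exotic_proof_r_equals_1}, and to generalize it to arbitrary $(n+1)$-cubes. The strategy rests on three ingredients: higher Blakers-Massey \cite[2.5]{Goodwillie_calculus_2} and its dual \cite[2.6]{Goodwillie_calculus_2} for $\Space$; the fact that $\Suspt_Z$ (a left adjoint) preserves cocartesian cubes and shifts $1$-subcube connectivities up by $1$; and the fact that $\Loopt_Z$ (a right adjoint) preserves cartesian cubes and shifts $1$-subcube connectivities down by $1$.

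Given an $(n+1)$-cube $\capX$ in $\SpaceZ$ that is $(\id+1)$-cartesian, I would first apply higher dual Blakers-Massey to each sub-$k$-cube to convert the cartesian estimates (together with the $1$-subcube connectivity data) into explicit cocartesian estimates. Applying $\Suspt_Z$ preserves these cocartesian estimates and improves the $1$-subcube connectivities by $1$; then applying higher Blakers-Massey to each sub-$k$-cube of $\Suspt_Z\capX$ converts the enriched data back into cartesian estimates. Finally, applying $\Loopt_Z$ preserves these cartesian estimates while lowering the $1$-subcube connectivities by $1$. A subcube-by-subcube verification should then show that each sub-$k$-cube of $(\Loopt_Z\Suspt_Z)\capX$ is again at least $(k+1)$-cartesian, completing the argument; the cases $n = -1, 0$ reduce, respectively, to a trivial base case and to Proposition \ref{prop:uniformity_estimates_low_dimension}.

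The main obstacle I anticipate is the combinatorial bookkeeping of the Blakers-Massey minima over partitions of the subcube indexing sets, for every sub-$k$-cube simultaneously. Fortunately, as the low-dimensional computations in Proposition \ref{prop:estimates_for_exotic_proof_r_equals_1} illustrate, the binding minimum is consistently achieved by the partition into singletons, so only the $1$-subcube estimates need careful tracking; these close up after a single pass through the cycle above. In effect, the general-$n$ case reduces to the same arithmetic that was verified by hand for $n = 1$ and $n = 2$ in the proof of Proposition \ref{prop:estimates_for_exotic_proof_r_equals_1}, now indexed uniformly over all sub-$k$-cubes of $\capX$.
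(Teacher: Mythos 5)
Your overall strategy is the same as the paper's: iterate higher dual Blakers--Massey, then $\Suspt_Z$, then higher Blakers--Massey, then $\Loopt_Z$, and check that the estimates close up. However, there is a genuine gap in the simplification you propose for the combinatorial bookkeeping. You claim that the binding minimum in each application of (dual) Blakers--Massey is achieved by the partition into singletons, and that consequently only the $1$-subcube estimates need tracking. This is false for this proposition, and the paper's own low-dimensional computations show it: starting from a $k$-subcube with $j$-subcubes $(j+1)$-cartesian, dual Blakers--Massey gives a minimum of $(k-1) + \sum_\alpha(|T_\alpha|+1) = 2k-1+m$ over partitions with $m$ blocks, which is minimized at $m=1$ (the trivial partition, using the estimate of the full $k$-subcube), giving $2k$; the singleton partition gives $3k-1 > 2k$ for $k\geq 2$. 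Similarly, in the Blakers--Massey step the trivial partition is again the one that binds (giving $k+2$ rather than the singleton value $2k+1$). You appear to be extrapolating from Proposition \ref{prop:estimates_for_exotic_proof_r_equals_1}, but there the input cubes are strongly $\infty$-cocartesian, which makes the trivial-partition term infinite and lets the singleton partition win; here the input is only $(\id+1)$-cartesian, so the trivial-partition term is finite and is exactly what controls the minimum.

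The upshot is that your proposal, as written, does not yield a valid argument: to apply (dual) Blakers--Massey you must bound the minimum over \emph{all} partitions, and since the binding partition here involves the top subcube estimate, you cannot discard the higher-dimensional subcube data. You must propagate the cartesian/cocartesian estimates of every $k$-subcube through each step of the cycle (dual BM, $\Suspt_Z$, BM, $\Loopt_Z$) and verify that the invariant ``$k$-subcubes are $(k+1)$-cartesian'' is restored, which is exactly what the paper's explicit $n=1,2$ computations do before concluding ``and so forth.'' (A minor point: the $n=0$ case does not reduce to Proposition \ref{prop:uniformity_estimates_low_dimension}, which concerns the coaugmentation $X\to\Loopt_Z\Suspt_Z X$; rather, one just observes that $\Suspt_Z$ raises and $\Loopt_Z$ lowers the connectivity of a map by one, so $2$-connectivity of a $1$-cube is preserved.)
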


\begin{proof}
The cases for $n=-1,0$ are trivial. Now that we know the desired behavior is satisfied on 0-subcubes, we will not continue to indicate their estimates below when verifying the $(\id+1)$-cartesian property. Consider the case of $n=1$. Assume that $\capX$ is an $(\id+1)$-cartesian 2-cube in $\SpaceZ$. Then $\capX$ satisfies: the 1-subcubes are 2-connected, and the 2-subcubes (there is only one) are 3-cartesian. By higher dual Blakers-Massey \cite[2.6]{Goodwillie_calculus_2} for $\Space$, we know the 2-cube is $k$-cocartesian where $k$ is the minimum of
\begin{align*}
  2-1+k_{\{1,2\}} &= 1+3\\
  2-1+k_{\{1\}}+k_{\{2\}} &= 1+2+2 
\end{align*}
Hence $k=4$, our 3-cube is 4-cocartesian, and $\capX$ satisfies: the 1-subcubes are 2-connected, and the 2-subcubes are 4-cocartesian. Then $\Suspt_Z\capX$ satisfies: the 1-subcubes are 3-connected, and the 2-subcubes are 5-cocartesian. By higher Blakers-Massey \cite[2.5]{Goodwillie_calculus_2} for $\Space$, we know the 2-cube is $k$-cartesian where $k$ is the minimum of
\begin{align*}
  1-2 + k_{\{1,2\}} &= -1 + 5\\
  1-2 + k_{\{1\}} + k_{\{2\}} &= -1+3+3
\end{align*}
Hence, $k=4$, our 2-cube is 4-cartesian, and $\Suspt_Z\capX$ satisfies: the 1-subcubes are 3-connected, and the 2-subcubes are 4-cartesian. Then $\Loopt_Z\Suspt_Z\capX$ satisfies: the 1-subcubes are 2-connected, and the 2-subcubes are 3-cartesian. Hence we have verified that $(\Loopt_Z\Suspt_Z)\capX$ is $(\id+1)$-cartesian in $\SpaceZ$. Consider the case of $n=2$. Assume that $\capX$ is an $(\id+1)$-cartesian 3-cube in $\SpaceZ$. Then $\capX$ satisfies: the 1-subcubes are 2-connected, the 2-subcubes are 3-cartesian, and the 3-subcubes (there is only one) are 4-cartesian. By higher dual Blakers-Massey \cite[2.5]{Goodwillie_calculus_2} for $\Space$, we know the 3-cube is $k$-cocartesian where $k$ is the minimum of
\begin{align*}
  3-1+k_{\{1,2,3\}} &=  2+4\\
  3-1+k_{\{1,2\}}+k_{\{3\}} &= 2+3+2\\
  3-1+k_{\{1\}}+k_{\{2\}}+k_{\{3\}} &=2+2+2+2 
\end{align*}
Hence $k=6$, our 3-cube is 6-cocartesian, and $\capX$ satisfies: the 1-subcubes are 2-connected, the 2-subcubes are 4-cocartesian, and the 3-subcubes are 6-cocartesian. Then $\Suspt_Z\capX$ satisfies: the 1-subcubes are 3-connected, the 2-subcubes are 5-cocartesian, and the 3-subcubes are 7-cocartesian. By higher Blakers-Massey \cite[2.5]{Goodwillie_calculus_2} for $\Space$, we know the 3-cube is $k$-cartesian where $k$ is the minimum of
\begin{align*}
  1-3+k_{\{1,2,3\}} &= -2+7 \\
  1-3+k_{\{1,2\}}+k_{\{3\}} &= -2+5+3\\
  1-3+k_{\{1\}}+k_{\{2\}}+k_{\{3\}} &= -2+3+3+3
\end{align*}
Hence, $k=5$, our 3-cube is 5-cartesian, and $\Suspt_Z\capX$ satisfies: the 1-subcubes are 3-connected, the 2-subcubes are 4-cartesian, and the 3-subcubes are 5-cartesian. Then $(\Loopt_Z\Suspt_Z)\capX$ satisfies: the 1-subcubes are 2-connected, the 2-subcubes are 3-cartesian, and the 3-subcubes are 4-cartesian. Hence we have verified that $(\Loopt_Z\Suspt_Z)\capX$ is $(\id+1)$-cartesian in $\SpaceZ$. And so forth.
\end{proof}

\begin{prop}
\label{prop:preserving_id_plus_one_cartesian_cubes}
Let $1\leq r\leq \infty$. If $n\geq -1$, then $(\Loopt^r_Z\Suspt^r_Z)$ preserves $(\id+1)$-cartesian $(n+1)$-cubes in $\SpaceZ$.
\end{prop}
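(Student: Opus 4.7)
The plan is to follow the exact template of Proposition \ref{prop:preserving_id_plus_one_cartesian_cubes_r_equals_one} (the $r=1$ case), threading $r$ consecutive applications of $\Suspt_Z$ followed by $r$ consecutive applications of $\Loopt_Z$ while tracking all subcube estimates through higher Blakers-Massey (\cite[2.5]{Goodwillie_calculus_2}) and its dual (\cite[2.6]{Goodwillie_calculus_2}) at the appropriate junctures. I would induct on cube dimension $n+1$, with the base cases $n=-1,0$ trivial.

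For finite $r$, starting with an $(\id+1)$-cartesian $(n+1)$-cube $\capX$ in $\SpaceZ$ (so each $j$-subcube is $(j+1)$-cartesian, reading $j=1$ as $2$-connected), I would proceed in four stages. First, apply higher dual Blakers-Massey on each $j$-subcube ($j\geq 2$) separately to convert cartesian estimates into cocartesian estimates, mirroring the opening moves of the $r=1$ argument. Second, apply $\Suspt_Z$ iteratively $r$ times, boosting every 1-subcube connectivity and every $j$-subcube cocartesian estimate by $1$ per application (since $\Suspt_Z$ preserves homotopy colimits and increases connectivity of maps by $1$). Third, apply higher Blakers-Massey on each $j$-subcube to convert back to cartesian estimates. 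Fourth, apply $\Loopt_Z$ iteratively $r$ times, reducing every estimate by $1$ per application. A direct computation shows that in both Blakers-Massey and its dual the minimum over partitions is always realized at the trivial (one-part) partition for our setup; this gives cocartesian estimate $2j$ on the $j$-subcube after stage one, cocartesian estimate $2j+r$ after stage two, cartesian estimate $j+1+r$ after stage three, and cartesian estimate $j+1$ after stage four, exactly recovering the $(\id+1)$-cartesian property.

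For $r=\infty$, the argument simplifies: $\Suspt_Z^\infty$ preserves cocartesian-ness exactly and $\Loopt_Z^\infty$ preserves cartesian-ness exactly (stable range), and the stable higher Blakers-Massey estimates of \cite[3.10]{Ching_Harper} replace the unstable ones used in the finite case, after which the same balancing goes through. The main obstacle is purely bookkeeping, namely verifying uniformly over all $n+1$ and all $r$ that the minimum in each Blakers-Massey computation is realized at the trivial partition; this is a straightforward arithmetic check (each non-trivial partition adds at least $r$ to the relevant sum), so the general induction follows by the same skeleton as the $n=1,2$ calculations already carried out in Proposition \ref{prop:preserving_id_plus_one_cartesian_cubes_r_equals_one}.
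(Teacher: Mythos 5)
Your proof is correct and takes essentially the paper's approach: Proposition \ref{prop:preserving_id_plus_one_cartesian_cubes_r_equals_one} carries out precisely your four-stage procedure (dual Blakers-Massey, suspend, Blakers-Massey, loop) for $r=1$ and $n\leq 2$, and your formulas $2j$, $2j+r$, $j+1+r$, $j+1$ on $j$-subcubes are exactly the general-$r$ pattern the paper's ``the other cases are similar'' points to, with your $r=\infty$ account matching the paper's appeal to the stability of $\Suspt_Z^\infty$, $\Loopt_Z^\infty$ and to \cite[3.10]{Ching_Harper}. One small imprecision: in the dual Blakers-Massey step (stage one) each extra part of the partition adds $1$ to the sum, not ``at least $r$''---that phrase is correct only for the Blakers-Massey step (stage three), where the partition contributes $1+j+pr$---but in both cases the minimum is at the trivial partition, so the conclusion stands.
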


\begin{proof}
A detailed proof of the $r=1$ case is given above (Proposition \ref{prop:preserving_id_plus_one_cartesian_cubes_r_equals_one}), and the other cases are similar. In the case of $r=\infty$, several of the steps are easier since $\Suspt_Z^\infty$ preserves cocartesian-ness, $\Loopt_Z^\infty$ preserves cartesian-ness, and the stable estimates in \cite[3.10]{Ching_Harper} are available for each estimate step following the application of $\Suspt_Z^\infty$.
\end{proof}

\begin{proof}[Proof of Theorem \ref{MainTheorem_BK_fibration}]
We follow the basic proof strategy in \cite{Schonsheck_fibration}, and the subsequent development in \cite{Carr_Harper}, for operadic algebras in spectra (where the estimates are different). Here is the basic idea. Consider the case $r=1$. Start with the Bousfield-Kan cosimplicial resolution
\begin{align}
\label{eq:a_nice_resolution}
\xymatrix{
  \id\ar[r] &
  (\Loopt_Z\Suspt_Z)\ar@<-0.5ex>[r]\ar@<0.5ex>[r] &
  (\Loopt_Z\Suspt_Z)^2
  \ar@<-1.0ex>[r]\ar[r]\ar@<1.0ex>[r] &
  (\Loopt_Z\Suspt_Z)^3\cdots
  }
\end{align}
of the identity functor and consider the fibration sequence $F\rightarrow E\rightarrow B$ in $\SpaceZ$. Since we know that $E,B$ are 1-connected (rel. $Z$) by assumption, this means that we have the homotopical estimates in Propositions \ref{prop:uniformity_estimates_low_dimension} and \ref{prop:estimates_for_spaces} available. With this in mind, let's resolve $E,B$ with respect to the Bousfield-Kan cosimplicial resolution
\begin{align*}
\xymatrix{
  F\ar[r]\ar[d] &
  \tilde{F}^0\ar[d]\ar@<-0.5ex>[r]\ar@<0.5ex>[r] &
  \tilde{F}^1\ar[d]
  \ar@<-1.0ex>[r]\ar[r]\ar@<1.0ex>[r] &
  \tilde{F}^2\ar[d]\cdots\\
  E\ar[r]\ar[d] &
  (\Loopt_Z\Suspt_Z)(E)\ar[d]\ar@<-0.5ex>[r]\ar@<0.5ex>[r] &
  (\Loopt_Z\Suspt_Z)^2(E)
  \ar@<-1.0ex>[r]\ar[r]\ar@<1.0ex>[r]\ar[d] &
  (\Loopt_Z\Suspt_Z)^3(E)\cdots\ar[d]\\
  B\ar[r] &
  (\Loopt_Z\Suspt_Z)(B)\ar@<-0.5ex>[r]\ar@<0.5ex>[r] &
  (\Loopt_Z\Suspt_Z)^2(B)
  \ar@<-1.0ex>[r]\ar[r]\ar@<1.0ex>[r] &
  (\Loopt_Z\Suspt_Z)^3(B)\cdots
  }
\end{align*}
and take homotopy fibers vertically to define the coaugmented cosimplicial diagram of the form $F\rightarrow\tilde F$. By construction the columns are homotopy fiber sequences in $\SpaceZ$, and since $E,B$ are 1-connected (rel $Z$), we know from Theorem \ref{MainTheorem_BK_completion} that
\begin{align*}
  E&\xrightarrow{\wequiv}\holim\nolimits_\Delta (\Loopt_Z\Suspt_Z)^{\bullet+1}(E)\\
  B&\xrightarrow{\wequiv}\holim\nolimits_\Delta (\Loopt_Z\Suspt_Z)^{\bullet+1}(B)
\end{align*}
Since homotopy limits commute with homotopy fibers, it follows that 
\begin{align*}
  F\wequiv\holim\nolimits_\Delta\tilde F
\end{align*}
We want to show that $F\wequiv F^\wedge_{\Loopt_Z\Suspt_Z}$. To get Bousfield-Kan completion into the picture, let's resolve each term in $F\rightarrow\tilde F$ with respect to \eqref{eq:a_nice_resolution} to obtain the cosimplicial resolution of coaugmented cosimplicial diagrams of the form
\begin{align}
\label{eq:nice_array_of_stuff}
\xymatrix{
  (\Loopt_Z\Suspt_Z)^3F\ar[r]^-{(\#)} &
  (\Loopt_Z\Suspt_Z)^3\tilde{F}^0\ar@<-0.5ex>[r]\ar@<0.5ex>[r] &
  (\Loopt_Z\Suspt_Z)^3\tilde{F}^1
  \ar@<-1.0ex>[r]\ar[r]\ar@<1.0ex>[r] &
  (\Loopt_Z\Suspt_Z)^3\tilde{F}^2\cdots\\
  (\Loopt_Z\Suspt_Z)^2F\ar[r]^-{(\#)}\ar@<-1.0ex>[u]\ar[u]\ar@<1.0ex>[u] &
  (\Loopt_Z\Suspt_Z)^2\tilde{F}^0\ar@<-0.5ex>[r]\ar@<0.5ex>[r]\ar@<-1.0ex>[u]\ar[u]\ar@<1.0ex>[u] &
  (\Loopt_Z\Suspt_Z)^2\tilde{F}^1\ar@<-1.0ex>[u]\ar[u]\ar@<1.0ex>[u]
  \ar@<-1.0ex>[r]\ar[r]\ar@<1.0ex>[r] &
  (\Loopt_Z\Suspt_Z)^2\tilde{F}^2\cdots\ar@<-1.0ex>[u]\ar[u]\ar@<1.0ex>[u]\\
  (\Loopt_Z\Suspt_Z)F\ar[r]^-{(\#)}\ar@<-0.5ex>[u]\ar@<0.5ex>[u] &
  (\Loopt_Z\Suspt_Z)\tilde{F}^0\ar@<-0.5ex>[r]\ar@<0.5ex>[r]\ar@<-0.5ex>[u]\ar@<0.5ex>[u] &
  (\Loopt_Z\Suspt_Z)\tilde{F}^1
  \ar@<-1.0ex>[r]\ar[r]\ar@<1.0ex>[r]\ar@<-0.5ex>[u]\ar@<0.5ex>[u] &
  (\Loopt_Z\Suspt_Z)\tilde{F}^2\cdots\ar@<-0.5ex>[u]\ar@<0.5ex>[u]\\
  F\ar[r]^-{(\#)}\ar[u] &
  \tilde{F}^0\ar@<-0.5ex>[r]\ar@<0.5ex>[r]\ar[u]_-{(**)} &
  \tilde{F}^1\ar[u]_-{(**)}
  \ar@<-1.0ex>[r]\ar[r]\ar@<1.0ex>[r] &
  \tilde{F}^2\cdots\ar[u]_-{(**)}
  }
\end{align}
We know by our homotopical estimates (Propositions \ref{prop:uniformity_estimates_low_dimension} and \ref{prop:estimates_for_spaces}) that the coface $(n+1)$-cubes (\cite[3.13]{Blomquist_Harper_integral_chains}) associated to
\begin{align*}
  E&\rightarrow (\Loopt_Z\Suspt_Z)^{\bullet+1}(E)\\
  B&\rightarrow (\Loopt_Z\Suspt_Z)^{\bullet+1}(B)
\end{align*}
are $((\id+1)+1)$-cartesian in $\SpaceZ$, and hence it follows by several applications of \cite[1.6, 1.18]{Goodwillie_calculus_2} that the coface $(n+1)$-cubes associated to $F\rightarrow\tilde F$ are $(\id+1)$-cartesian in $\SpaceZ$. We know, by Proposition \ref{prop:preserving_id_plus_one_cartesian_cubes}, that $(\Loopt_Z\Suspt_Z)$ preserves $(\id+1)$-cartesian $(n+1)$-cubes in $\SpaceZ$ for each $n\geq -1$. Therefore, the coface $(n+1)$-cubes (\cite[3.13]{Blomquist_Harper_integral_chains}) associated to
\begin{align*}
  (\Loopt_Z\Suspt_Z)^k F\rightarrow
  (\Loopt_Z\Suspt_Z)^k \tilde F,\quad\quad
  k\geq 0
\end{align*}
are $(\id+1)$-cartesian in $\SpaceZ$ for each $n\geq -1$, and hence each of the maps
\begin{align*}
  (\Loopt_Z\Suspt_Z)^k F\rightarrow\holim\nolimits_{\Delta^{\leq n}}
  (\Loopt_Z\Suspt_Z)^k \tilde F,\quad\quad
  k\geq 0
\end{align*}
is $(n+2)$-connected. Therefore applying $\holim_\Delta$ horizontally to the maps $(\#)$ induces a weak equivalence, and hence applying $\holim_\Delta$ first horizontally and then vertically produces 
\begin{align*}
  \holim_{\mathsf{vert}}\holim_{\mathsf{horiz}}\eqref{eq:nice_array_of_stuff}\wequiv
  F^\wedge_{\Loopt_Z\Suspt_Z}
\end{align*}
What about the other way? Since the $(**)$ columns have extra codegeneracy maps $s^{-1}$ \cite[6.2]{Dwyer_Miller_Neisendorfer} (by formal reasons: $\Loopt_Z$ commutes with homotopy fibers), applying $\holim_\Delta$ vertically produces \cite[3.16]{Dror_Dwyer_long_homology} the coaugmented cosimplicial diagram
\begin{align*}
\xymatrix{
  F\ar[r] &
  \tilde{F}^0\ar@<-0.5ex>[r]\ar@<0.5ex>[r] &
  \tilde{F}^1
  \ar@<-1.0ex>[r]\ar[r]\ar@<1.0ex>[r] &
  \tilde{F}^2\cdots
}
\end{align*}
and hence applying $\holim_\Delta$ first vertically and then horizontally produces
\begin{align*}
  \holim_{\mathsf{horiz}}\holim_{\mathsf{vert}}\eqref{eq:nice_array_of_stuff}\wequiv
  F
\end{align*}
Hence we have verified that the coaugmentation
\begin{align*}
  F\wequiv F^\wedge_{\Loopt_Z\Suspt_Z}
\end{align*}
is a weak equivalence. The other cases are similar (the estimates are identical). Consider the case of the homotopy pullback square; then $F\rightarrow\tilde F$ is constructed by taking homotopy pullbacks instead of homotopy fibers and the above arguments complete the proof.
\end{proof}

\bibliographystyle{plain}
\bibliography{RetractiveSpaces}

\begin{thebibliography}{10}

\bibitem{Arone_Kankaanrinta}
G.~Arone and M.~Kankaanrinta.
\newblock A functorial model for iterated {S}naith splitting with applications
  to calculus of functors.
\newblock In {\em Stable and unstable homotopy ({T}oronto, {ON}, 1996)},
  volume~19 of {\em Fields Inst. Commun.}, pages 1--30. Amer. Math. Soc.,
  Providence, RI, 1998.

\bibitem{Bauer_Johnson_McCarthy}
K.~Bauer, B.~Johnson, and R.~McCarthy.
\newblock Cross effects and calculus in an unbased setting.
\newblock {\em Trans. Amer. Math. Soc.}, 367(9):6671--6718, 2015.
\newblock With an appendix by R. Eldred.

\bibitem{Blomquist_Harper_integral_chains}
J.~R. Blomquist and J.~E. Harper.
\newblock Integral chains and {B}ousfield-{K}an completion.
\newblock {\em Homology Homotopy Appl.}, 21(2):29--58, 2019.

\bibitem{Blomquist_Harper}
J.~R. Blomquist and J.~E. Harper.
\newblock Higher stabilization and higher {F}reudenthal suspension.
\newblock {\em Trans. Amer. Math. Soc.}, 375(11):8193--8240, 2022.

\bibitem{Blumberg_Riehl}
A.~J. Blumberg and E.~Riehl.
\newblock Homotopical resolutions associated to deformable adjunctions.
\newblock {\em Algebr. Geom. Topol.}, 14(5):3021--3048, 2014.

\bibitem{Bousfield_cosimplicial_space}
A.~K. Bousfield.
\newblock On the homology spectral sequence of a cosimplicial space.
\newblock {\em Amer. J. Math.}, 109(2):361--394, 1987.

\bibitem{Bousfield_Kan}
A.~K. Bousfield and D.~M. Kan.
\newblock {\em Homotopy limits, completions and localizations}.
\newblock Lecture Notes in Mathematics, Vol. 304. Springer-Verlag, Berlin,
  1972.

\bibitem{Carlsson_equivariant}
G.~Carlsson.
\newblock Equivariant stable homotopy and {S}ullivan's conjecture.
\newblock {\em Invent. Math.}, 103(3):497--525, 1991.

\bibitem{Carr_Harper}
M.~B. Carr and J.~E. Harper.
\newblock Functor calculus completions for retractive operadic algebras in
  spectra.
\newblock {\em \verb=arXiv:2407.01819 [math.AT]=}, 2024.

\bibitem{Ching_Harper}
M.~Ching and J.~E. Harper.
\newblock Higher homotopy excision and {B}lakers-{M}assey theorems for
  structured ring spectra.
\newblock {\em Adv. Math.}, 298:654--692, 2016.

\bibitem{Ching_Harper_derived}
M.~Ching and J.~E. Harper.
\newblock Derived {K}oszul duality and {TQ}-homology completion of structured
  ring spectra.
\newblock {\em Adv. Math.}, 341:118--187, 2019.

\bibitem{Dror_Dwyer_long_homology}
E.~Dror and W.~G. Dwyer.
\newblock A long homology localization tower.
\newblock {\em Comment. Math. Helv.}, 52(2):185--210, 1977.

\bibitem{Dundas}
B.~I. Dundas.
\newblock Relative {$K$}-theory and topological cyclic homology.
\newblock {\em Acta Math.}, 179(2):223--242, 1997.

\bibitem{Dundas_Goodwillie_McCarthy}
B.~I. Dundas, T.~G. Goodwillie, and R.~McCarthy.
\newblock {\em The local structure of algebraic {K}-theory}, volume~18 of {\em
  Algebra and Applications}.
\newblock Springer-Verlag London, Ltd., London, 2013.

\bibitem{Dwyer_Miller_Neisendorfer}
W.~G. Dwyer, H.~R. Miller, and J.~Neisendorfer.
\newblock Fibrewise completion and unstable {A}dams spectral sequences.
\newblock {\em Israel J. Math.}, 66(1-3):160--178, 1989.

\bibitem{Dwyer_Spalinski}
W.~G. Dwyer and J.~Spali{\'n}ski.
\newblock Homotopy theories and model categories.
\newblock In {\em Handbook of algebraic topology}, pages 73--126.
  North-Holland, Amsterdam, 1995.

\bibitem{Goerss_Jardine}
P.~G. Goerss and J.~F. Jardine.
\newblock {\em Simplicial homotopy theory}, volume 174 of {\em Progress in
  Mathematics}.
\newblock Birkh\"auser Verlag, Basel, 1999.

\bibitem{Goodwillie_calculus_2}
T.~G. Goodwillie.
\newblock Calculus. {II}. {A}nalytic functors.
\newblock {\em $K$-Theory}, 5(4):295--332, 1991/92.

\bibitem{Goodwillie_calculus_3}
T.~G. Goodwillie.
\newblock Calculus. {III}. {T}aylor series.
\newblock {\em Geom. Topol.}, 7:645--711 (electronic), 2003.

\bibitem{Harper_Zhang}
J.~E. Harper and Y.~Zhang.
\newblock Topological {Q}uillen localization of structured ring spectra.
\newblock {\em Tbilisi Math. J.}, 12(3):69--91, 2019.

\bibitem{Hirschhorn_overcategories}
P.~S. Hirschhorn.
\newblock Overcategories and undercategories of model categories.
\newblock {\em \\ \verb=arXiv:1507.01624 [math.AT]=}, 2015.

\bibitem{Hovey}
M.~Hovey.
\newblock {\em Model categories}, volume~63 of {\em Mathematical Surveys and
  Monographs}.
\newblock American Mathematical Society, Providence, RI, 1999.

\bibitem{Hovey_spectra}
M.~Hovey.
\newblock Spectra and symmetric spectra in general model categories.
\newblock {\em J. Pure Appl. Algebra}, 165(1):63--127, 2001.

\bibitem{Kuhn_survey}
N.~J. Kuhn.
\newblock Goodwillie towers and chromatic homotopy: an overview.
\newblock In {\em Proceedings of the {N}ishida {F}est ({K}inosaki 2003)},
  volume~10 of {\em Geom. Topol. Monogr.}, pages 245--279. Geom. Topol. Publ.,
  Coventry, 2007.

\bibitem{Quillen}
D.~Quillen.
\newblock {\em Homotopical algebra}.
\newblock Lecture Notes in Mathematics, No. 43. Springer-Verlag, Berlin, 1967.

\bibitem{Schonsheck_fibration}
N.~Schonsheck.
\newblock Fibration theorems for {TQ}-completion of structured ring spectra.
\newblock {\em Tbilisi Math. J.: Special Issue on Homotopy Theory, Spectra, and
  Structured Ring Spectra}, pages 1--15, 2020.

\bibitem{Schonsheck_private}
N.~Schonsheck.
\newblock Personal communication.
\newblock 2021.

\bibitem{Schonsheck_TQ}
N.~Schonsheck.
\newblock {TQ}-completion and the {T}aylor tower of the identity functor.
\newblock {\em J. Homotopy Relat. Struct.}, 17(2):201--216, 2022.

\bibitem{Schwede_cotangent}
S.~Schwede.
\newblock Spectra in model categories and applications to the algebraic
  cotangent complex.
\newblock {\em J. Pure Appl. Algebra}, 120(1):77--104, 1997.

\end{thebibliography}

\end{document}